\newcommand{\qspec}{\mbox{\rm{\texttt{QSpec}}}} 
\newcommand{\qmax}{\mbox{\rm{\texttt{QMax}}}}
\newcommand{\spec}{\mbox{\rm{\texttt{Spec}}}}
\newcommand{\Spec}{\spec}
\newcommand{\Max}{\mbox{\rm{\texttt{Max}}}}
\newcommand{\SStarf}{\mbox{\rm{\texttt{SStar}}}_{f}} 
\newcommand{\insfinss}{\SStarf}
\newcommand{\SStarstabtf}{\widetilde{\mbox{ \rm{\texttt{SStar}}}}}
\newcommand{\inssemistabft}{\SStarstabtf}
\newcommand{\SStar}{\mbox{\rm\texttt{SStar}}}
\newcommand{\inssemistar}{\SStar}
     \DeclareMathOperator{\chiusinv}{\mbox{\rm\texttt{Cl}}^{\mbox{\tiny\rm\texttt{inv}}}}
         \DeclareMathOperator{\chiuscons}{\mbox{\rm\texttt{Cl}}^{\mbox{\tiny\rm\texttt{cons}}}}
       \newcommand{\s}{{\mbox{\rm\texttt{s}}}}
\newcommand{\bc}{\color{blue}}%
\newcommand{\ms}{\mathscr}
\newcommand{\Ubold}{\boldsymbol{U}}
\newcommand{\xcal}{{\boldsymbol{\mathcal{X}}}}
\newcommand{\scal}{{\boldsymbol{\mathcal{S}}}}
\newcommand{\ucal}{{\boldsymbol{\mathcal{U}}}}
  \newcommand{\stt} {\widetilde{\star}}
    \newcommand{\FF}{\boldsymbol{\overline{F}}}
                \newcommand{\F}{\boldsymbol{F}}
                \newcommand{\f}{\boldsymbol{f}}
    \DeclareMathOperator{\chius}{\mbox{\texttt{Cl}}}
     \newcommand{\overr}{\mathtt{Overr}}
     \newcommand{\insZ}{\mathbb Z}
\newtheoremstyle{mio}%
	{}{} 
	{\itshape}{} 
	{\bfseries}{.}{ } 
	{#1 #2\thmnote{\mdseries~(\scshape #3)}} 
	\theoremstyle{mio}
\newtheorem{teor}{Theorem}[section]
\newtheorem{cor}[teor]{Corollary}
\newtheorem{prop}[teor]{Proposition}
\newtheorem{lemma}[teor]{Lemma}
\theoremstyle{definition}
\newtheorem{defin}[teor]{Definition}
\newtheorem{ex}[teor]{ \textbf{Example}}
\newtheorem{oss}[teor]{Remark}
\DeclareMathOperator{\Cl}{Cl}
\DeclareMathOperator{\rad}{rad}
\begin{document}  

\title[Semigroup primes of a commutative ring]
{Topological properties of\\ semigroup primes  of a commutative ring}
\author{Carmelo A. Finocchiaro,
Marco Fontana, and 
Dario Spirito}

\noindent \email{\newline $\;\;\;\;$ finocchiaro@math.tugraz.at; fontana@mat.uniroma3.it; spirito@mat.uniroma3.it}

\address{C.A.F.: Institute of Analysis and Number Theory, University of Technology,
\newline $\;\;\;\;$ Steyrergasse 30/II, 8010 Graz, Austria}

\address{M.F.{\&}D.S.: Dipartimento di Matematica e Fisica, Universit\`a degli Studi
``Roma Tre'', \newline $\;\;\;\;$ Largo San Leonardo Murialdo, 1, 00146 Roma, Italy}

\keywords{Spectral space, spectral map, Zariski topology, constructible topology, inverse topology, semistar operation, semigroup prime, Nagata ring.}

\subjclass[2010]{13A15, 13G05, 13B10,  13C11, 13F05, 14A05,  54A10}

\thanks{This work was partially supported by {\sl GNSAGA} of {\sl Istituto Nazionale di Alta Matematica}. The first named author was also supported by a Post Doc Grant from the University of Technology of Graz (Austrian Science Fund (FWF): P 27816).}
\date{\today}

\begin{abstract} 
A  semigroup prime  of a commutative ring $R$ is a prime ideal of the semigroup $(R,\cdot)$.  One of the purposes of this paper is to study, from a topological point of view, the space $\scal(R)$ of prime semigroups of $R$.  We show that, under a natural topology introduced by B. Olberding  in 2010, $\scal(R)$ is a spectral space  (after Hochster),   spectral extension  of $\Spec(R)$, and that the assignment $R\mapsto\scal(R)$  induces a contravariant functor.   We  then relate -- in the case   $R$ is an integral domain -- the topology on $\scal(R)$ with the Zariski topology on the set of overrings of $R$.   Furthermore,  we investigate the relationship between $\scal(R)$ and the space $\boldsymbol{\mathcal{X}}(R)$ consisting of all nonempty inverse-closed subspaces of $\spec(R)$, which has been introduced and studied in \cite{FiFoSp-X(X)}.
 In this context, we 
show that $\scal( R)$   is a spectral retract of $\boldsymbol{\mathcal{X}}(R)$    and we characterize when   $\scal( R)$   is canonically homeomorphic to $\boldsymbol{\mathcal{X}}(R)$, both in general and when $\spec(R)$ is a Noetherian space. In particular, we obtain that, when $R$ is a B\'ezout domain,   $\scal( R)$   is  canonically homeomorphic both to $\boldsymbol{\mathcal{X}}(R)$ and to the space $\overr(R)$ of the overrings of $R$ (endowed with the Zariski topology).
 Finally, we compare the space $\boldsymbol{\mathcal{X}}(R)$ with the space $\scal(R(T))$ of semigroup primes of the Nagata ring $R(T)$, providing a canonical spectral embedding $\xcal(R)\hookrightarrow\scal(R(T))$ which makes $\xcal(R)$ a spectral retract of $\scal(R(T))$.

 \end{abstract}

\maketitle

\section{Introduction and preliminaries}
 
The concept of prime ideal, and the closely related concept of localization,    play  a   fundamental role in commutative ring theory. In the forties of the last century,  the concept of prime ideal was introduced  in the setting of  semigroups, and some analogies and differences between the ring and semigroup theories were  pointed out  (cf., for instance, \cite{re-40}, \cite{gri}, and \cite{ki}).  Since a ring $R$ can be also   regarded  as  a semigroup (by considering only the multiplicative structure), it is reasonable to bring back the concept   of semigroup prime  from semigroups to rings: hence, we define a \emph{semigroup prime} of a ring $R$ to be a prime ideal of the semigroup $(R,\cdot)$.

Clearly, every prime ideal of $R$ is also a semigroup prime, but not conversely:  the set $\scal(R)$ of all semigroup primes of $R$ is in general much larger than the prime spectrum $\spec(R)$ of $R$. 
An additional link ties the two concepts:
semigroup primes of $R$ turn out to be the complement of saturated multiplicatively closed subsets of $R$ and so they give rise to general ring of fractions,  while prime ideals give rise to localizations. 

  Nevertheless,  for   a long time, semigroup primes of a commutative ring were left out from the   mainstream of    investigation, even in the natural context of multiplicative ideal theory of rings and integral domains.

Recently, B. Olberding \cite{ol} has  considered the space   $\scal( R)$, equipped with a Zariski-like topology, for obtaining new important properties of the spaces of overrings and 
valuation overrings of an integral domain $R$.

In this paper, we pursue the study of $\scal( R)$, mainly from a topological point of view, considering the general case of a commutative ring $R$ with applications to the special case of when $R$ is an integral domain.  The relevant topologies that turn out to be useful in our investigation are the   hull-kernel topology (classically introduced by Stone \cite{st-37}) or Zariski topology, the constructible  or patch topology (cf. \cite{EGA}, and \cite{ho}), with an underlying ultrafilter theoretic approach (cf. \cite{fo-lo-2008}, \cite{Fi} and 
\cite{lo-2011}) and the inverse topology introduced by Hochster on arbitrary spectral spaces   \cite{ho} (definitions  and properties used in the present paper will be recalled later in this section).

As a starting point, we prove that $\scal( R)$, when endowed with the hull-kernel topology, is a new unconventional example of spectral space (after Hochster), that the  inclusion map $\spec(R) \hookrightarrow \scal( R)$ is a spectral map, and that the assignment $R \mapsto \scal(R)$ induces a contravariant functor.  
Next, we compare the spectral space $\scal( R)$  with the space $\boldsymbol{\mathcal{X}}(R)$ consisting of all   nonempty inverse-closed subspaces of $\spec(R)$, which has been  
introduced and studied in  \cite{FiFoSp-X(X)} to classify, from a topological point of view,  distinguished classes of Krull closure operations, namely the  e.a.b. semistar operations and the stable semistar operations of finite type. In particular, we prove here that $\scal( R)$   is a spectral retract of $\boldsymbol{\mathcal{X}}(R)$ (Proposition \ref{homeo}) and we characterize when   $\scal( R)$   is canonically homeomorphic to $\boldsymbol{\mathcal{X}}(R)$, both in general and when $\spec(R)$ is a Noetherian space. In the general case, this happens  under the purely algebraic condition that  the radical of every finitely generated ideal of $R$ is the radical of a principal ideal (Theorem \ref{prop:scal-xcal}) and, in the Noetherian space case,  when every prime ideal of $R$ is the radical of a principal ideal (Corollary \ref{j-homeo}). When $R$ is a B\'ezout domain, we prove that  $\scal( R)$   is  canonically homeomorphic both to $\boldsymbol{\mathcal{X}}(R)$ and to the space $\overr(R)$ of the overrings of $R$ endowed with the Zariski topology (Corollary \ref{prop:QR}).  
When $R$ is a Dedekind domain, $\scal( R)$   is  canonically homeomorphic to $\boldsymbol{\mathcal{X}}(R)$ if and only if the ideal class group of $R$ is torsion (Remark \ref{rk-dedekind}).
Each of the previous homeomorphisms can be   interpreted  as  a topological ``dual'' statement   to  Hilbert's Nullstellensatz, providing 
a one-to-one correspondence, compatible with the natural orders,
 between inverse-closed subspaces of $\spec(R)$ and semigroup  primes  of $R$.

In the final section, we compare the space $\boldsymbol{\mathcal{X}}(R)$ with the space $\scal(R(T))$ of semigroup primes of the Nagata ring $R(T)$  (where $T$ is an 
indeterminate over $R$).
 In particular, we provide a canonical spectral embedding  $\boldsymbol{\mathcal{X}}(R) \hookrightarrow \scal(R(T))$  which   makes $\boldsymbol{\mathcal{X}}(R)$ a spectral retract of $\scal(R(T))$ (Propositions 4.2 and 4.4).

\medskip

 In order to facilitate the reader, we recall next some preliminary notions and results that will be used in the present paper.

\subsection{Spectral spaces}

A topological space is \emph{spectral}  (after M. Hochster \cite{ho}) if it is homeomorphic to the prime spectrum of a (commutative) ring. While defined in algebraic terms, this concept admits a purely topological characterization:   a topological space $X$ 
is spectral if and only if it is T$_0$, quasi-compact, it admits a basis of open and quasi-compact subspaces that is closed under finite intersections, and every irreducible closed  subset of $X$ has a (unique) generic point (i.e., it is the closure of a one-point set) \cite{ho}.
  If $X$ and $Y$ are spectral spaces, a {\it  spectral map} $f: X\rightarrow Y$ is a map such that $f^{-1}(U)$ is a quasi-compact open subspace of $X$, for each quasi-compact open subspace $U$ of $Y$; spectral maps are the morphisms in the category having the spectral spaces as objects. 

It is well known that the prime spectrum of a commutative ring endowed with the Zariski topology  is always $T_0$,
but almost never Hausdorff (it is Hausdorff   if and only if the ring has Krull dimension zero).
Thus, many authors have considered a finer topology on the prime spectrum of a ring, known as the \emph{constructible topology}   \cite [pages 337-339]{EGA}  or as the \emph{patch topology} \cite{ho}.

As in \cite{sch-tr}, we introduce  the \emph{constructible topology}  by a Kuratowski closure ope\-rator: if    $ X$ is a spectral space, for each subset $Y$  of $X$, we set:
$$
\begin{array}{rl}
\chiuscons(Y) \! :=  \!\bigcap \{U \!\cup\! (X\!\setminus\! V) \mid  & \hskip -5pt \mbox{ $U$ and $V$ open and quasi-compact in }  X,  \\
 & \hskip -4pt U  \!\cup \! (X \!\setminus \! V) \supseteq Y \}\,.
 \end{array}
$$
We denote by $X^{\mbox{\tiny\texttt{cons}}}$ the set $X$, equipped with the constructible topology.
For Noetherian topological spaces, the closed sets of this topology coincide with the ``constructible sets'' classically defined in \cite{ch}. 
It is well known that $X^{\mbox{\tiny\texttt{cons}}}$ is a spectral space and that the constructible topology  is a refinement of the given topology which is always Hausdorff.   

\subsection{The inverse topology on a spectral space}
Recall that the given topology on  a spectral space $X$ induces a canonical partial order  $ \leq_X$, denoted simply by $\leq$ when no danger of confusion can arise, defined by $x\leq_X y$ if $y\in \chius(\{x\})$, for $x,y\in X$, where $\chius(Y)$ denotes the closure of a subset $Y$ of $X$. The set
$
Y^{\mbox{\tiny\texttt{gen}}} :=\{x\in X \mid  y\in \chius(\{x\}),\mbox{ for some }y\in Y \}
$
is called \textit{closure under generizations of $Y$}. 
Similarly, using the opposite order,  the set
$
Y^{\mbox{\tiny\texttt{sp}}}:=\{x\in X \mid  x\in \chius(\{y\}),\mbox{ for some }y\in Y \}
$
is called \textit{closure under specia\-li\-zations of $Y$}. We say that $Y$ is \textit{closed under generi\-zations}  (respectively, \textit{closed under specia\-li\-zations})  if $Y=Y^{\mbox{\tiny\texttt{gen}}}$ (respectively, $Y=Y^{\mbox{\tiny\texttt{sp}}}$). 
It is straightforward that, for two elements $x, y$ in a spectral space $X$, we have: 
 $$
 x \leq y \quad \Leftrightarrow \quad \{x \}^{\mbox{\tiny\texttt{gen}}} \subseteq \{y \}^{\mbox{\tiny\texttt{gen}}} \quad \Leftrightarrow \quad\{x \}^{\mbox{\tiny\texttt{sp}}} \supseteq \{y \}^{\mbox{\tiny\texttt{sp}}}\,.
 $$

 Given  a spectral space $X$, Hochster \cite[Proposition 8]{ho} introduced a new topology on $ X$, that we call here the  {\it inverse topology}, by defining a Kuratowski closure operator, for each subset $Y$  of $X$, as follows:
  $$
 \chius^{\mbox{\tiny\texttt{inv}}}(Y) :=
\bigcap \{U   \mid  \mbox{ $U$ open and quasi-compact in } X,  \; 
U  \supseteq Y \}\,.
$$
If we denote by  $X^{\mbox{\tiny\texttt{inv}}}$ the set $X$ equipped with the inverse topology, Hochster proved that $X^{\mbox{\tiny\texttt{inv}}}$ is still a spectral space and the partial order on $X$ induced by the inverse topology is the opposite order of that induced by the given topology on $X$     \cite[Proposition 8]{ho}. 
In particular, the closure under generizations $\{x\}^{\mbox{\tiny\texttt{gen}}}$ of a singleton is closed in the inverse topology  of $X$, since   $\{x \}^{\mbox{\tiny\texttt{gen}}}=\bigcap\{U \mid  U \subseteq X\mbox{ quasi-compact and open},\, x\in U \}$.  
On the other hand, it is trivial, by the definition,  that the closure under specializations of a singleton $\{x \}^{\mbox{\tiny\texttt{sp}}}$ is closed in the given topology of $X$, since $\{x \}^{\mbox{\tiny\texttt{sp}}}= \chius(\{x\})$.

Finally, recall that,  by \cite[Remark 2.2]{fifolo2}, we have 
$\chiusinv(Y)=(\chiuscons(Y))^{\mbox{\tiny\texttt{gen}}}$. 
It follows that each closed set in the inverse topology  (called for short, \emph{inverse-closed}) is closed under generizations  and, from \cite[Proposition 2.6]{fifolo2},  that a quasi-compact subspace $Y$ of $X$ closed for generizations is inverse-closed.  
On the other hand, the closure of a subset $Y$ in the given topology of $X$, $\chius(Y)$, coincides with   $(\chiuscons(Y))^{\mbox{\tiny\texttt{sp}}}$   \cite[Remark 2.2]{fifolo2}.

\subsection{The spectral space of the inverse-closed subspaces}\label{X(X)}

  Given a spectral space $X$, let $\boldsymbol{\mathcal{X}}(X) := \{ Y \subseteq X \mid   Y \neq \emptyset,\,  Y =\chiusinv(Y)\}$, that is, $\xcal(X)$ is the set of all nonempty subset of $X$ that are closed in the inverse topology.  
   If $X=\spec(R)$ for some ring $R$, we write for short $\xcal(R)$ instead of $\xcal(\spec(R))$.

 We define a \textit{Zariski topology on} $\boldsymbol{\mathcal{X}}(X)$ by taking, as subbasis (in fact, a basis) of open sets, the sets of the form 
$$
\boldsymbol{\mathcal{U}}(\Omega):=\{Y\in \boldsymbol{\mathcal{X}} \mid Y\subseteq \Omega \},$$
where $\Omega$ varies among the quasi-compact open subspaces of $X$. 
Note that $\emptyset \neq \Omega \in \boldsymbol{\mathcal{U}}(\Omega)$, since a quasi-compact open subset $\Omega$ of $X$ is a closed in the inverse topology of $X$. 
Note also that,  when $X=\spec( R)$, for some ring $R$, a generic basic open set of the Zariski topology on $\boldsymbol{\mathcal{X}}(R)$ is of the form
$$
\boldsymbol{\mathcal{U}}(\texttt{D}(J))=\{Y\in \boldsymbol{\mathcal{X}}(R) \mid Y\subseteq \texttt{D}(J) \},
$$
where $J$ is any finitely generated ideal of $R$, and, as usual,
 $$
 \texttt{V}(J):= \{ P\in \spec(R) \mid J  \subseteq P\} \quad \mbox{   and   } \quad \texttt{D}(J):=  \spec(R) \setminus \texttt{V}(J)\,.
 $$

It was proved in \cite[Theorem 3.2]{FiFoSp-X(X)} that: 

\begin{itemize}
\item[(1)]
the space
 $ \boldsymbol{\mathcal{X}}(X)$, 
 endowed with the Zariski topology, is a spectral space;

 \item[(2)] the canonical map $\varphi:X\hookrightarrow  \boldsymbol{\mathcal{X}}(X)$, defined by
$
\varphi(x):=\{x\}^{\mbox{\tiny{\rm{\texttt{gen}}}}}
$, for each $x\in X$,   is  a spectral embedding  (and, in particular, an order-preserving embedding  between ordered sets, with the ordering induced by the Zariski topologies). 
\end{itemize}

\subsection{Semistar operations}

Let $D$ be an integral domain with quotient field $K$. Let $\FF(D)$ (respectively, $\F(D)$; $\f(D)$) be the set of all nonzero $D$--submodules of $K$ (respectively, nonzero fractional ideals; nonzero finitely 
generated fractional ideals) of $D$ (thus, $\f(D)\subseteq\F(D)\subseteq\FF(D)$).

A mapping $\star:\FF(D)\longrightarrow\FF(D)$, $E\mapsto E^\star$, is called a \emph{semistar operation} of $D$ if, for all $z\in K$, $z\neq 0$ and for all $E,F \in\FF(D)$, the following properties hold: $\mathbf{\bf(\star_1)} \;(zE)^\star =zE^\star$; $\mathbf{\bf (\star_2)} \; E\subseteq F \Rightarrow E^\star \subseteq F^\star$; $\mathbf{ \bf (\star_3)}\; E \subseteq E^\star$; and $\mathbf{ \bf (\star_4)}\;  E^{\star \star} := (E^\star)^\star = E^\star $.   We denote the set of all semistar operations on $D$ by $\SStar(D)$.

Given a semistar operation $\star$ on $D$, a nonzero ideal $I$ of $D$ is called a \emph{quasi-$\star$-ideal} if $I = I^\star \cap D$. A \emph{quasi-$\star$-prime} is a quasi-$\star$-ideal which is also a prime ideal. The set of all quasi-$\star$-prime ideals of $D$ is denoted by $ \qspec^\star(D)$. The set of maximal elements in the set of proper quasi-$\star$-ideals of $D$ (ordered by set-theoretic inclusion) is denoted by $\qmax^\star(D)$ and it is a subset of $\qspec^\star(D)$.

A semistar operation $\star$ is \emph{of finite type} if, for every $E\in\FF(D)$,
\begin{equation*}
E^{\star}=\bigcup \{F^\star \mid  F \subseteq E,  F \in \f(D)\}.
\end{equation*}
 It is well known that if $\star$ is a semistar operation of finite type then $\qmax^\star(D)$ is nonempty \cite[Lemma 2.3(1)]{fo-lo-2003}. 

For more details on semistar operations see, for instance, \cite{ep-12}, \cite{ep-15}, \cite{HK-01}, \cite{HK-2011}, \cite{ma-2011}, and \cite{ok-ma-2006}; for  the case of star operations see, for instance,  \cite{dda-88}, \cite{dda-dfa-90}, \cite{dda-cl-05},  \cite{el-2010} and    \cite{gi}.
 
 The  set of all semistar operations of finite type is denoted by $\SStarf(D)$.  

In \cite{FiSp}, the set $\SStar(D)$ of all semistar operation was endowed with a topology (called the \emph{Zariski topology}) having, as a subbasis of open sets, the sets of the type 
\begin{equation*}
\texttt{V}_E:=\{\star\in\SStar(D)\mid 1\in E^\star\}, \mbox{ where $E$ is a nonzero $D$-submodule of $K$}.
\end{equation*}
This topology makes $\SStar(D)$ into a quasi-compact T$_0$ space, and $\SStarf(D)$ into a spectral space.

\subsection{Spectral semistar operations}
Let $D$ be a domain and $Y\subseteq\Spec(D)$ be nonempty. The semistar operation $\s_Y$ is defined as the map such that 
\begin{equation*}
E^{\mbox{\small${\s}$}_Y}=\bigcap\{ED_P\mid P\in Y\}\quad\text{for every $E\in\FF(D)$}.
\end{equation*}
The semistar operations on $D$ that can be written as $\s_Y$, for some $Y$, are called \emph{spectral}; the set of all finite  type spectral semistar operations, denoted by $\inssemistabft(D)$, is a spectral space \cite[Theorem 4.6]{FiFoSp-JPAA}. By \cite[Corollary 4.4]{FiSp}, $\s_Y$ is of finite type if and only if $Y$ is quasi-compact,   as a subspace of $\spec(D)$, endowed with the Zariski topology (see also  \cite{fohu} and  \cite{HK-01}).

There is a canonical map
\begin{equation*}
\begin{aligned}
\widetilde{\Phi}\colon\inssemistar(D) & \longrightarrow \inssemistabft(D)\\
\star & \longmapsto \stt,
\end{aligned}
\end{equation*}
where $\stt$ is defined as the map such that, for every $E\in\FF(D)$,
\begin{equation*}
\begin{array}{rcl}
E^{\stt}:= & \bigcup \{ (E:J)\mid J \mbox{ nonzero finitely generated ideal of } D \\
& \mbox{ such that } J^\star =D^\star\}.
\end{array}
\end{equation*}
The map $\widetilde{\Phi}$ is a topological retraction \cite[Proposition 4.3(2)]{FiFoSp-JPAA}; in particular, $\star=\stt$ if and only if $\star$ is spectral and of finite type \cite[Corollary 3.9(2)]{fohu}.

The space $\SStarstabtf(D)$ can also be seen as a natural ``extension'' of $\spec(D)$, since the canonical map $\s\colon\spec(D)\hookrightarrow\inssemistabft(D) $, defined by 
$P  \mapsto \s_{\{P\}}$, is a topological embedding.

An alternative way to see the space $\inssemistabft(D)$ is through the space $\xcal(D)$ recalled in Section \ref{X(X)}.   By  \cite[Proposition 5.2]{FiFoSp-X(X)}, we have the following.
\begin{itemize}
\item The map
$\boldsymbol{\s^\sharp}\colon\xcal(D)  \rightarrow \inssemistabft(D)$, defined by
$Y   \mapsto \s_Y$, 
and the map
$\Delta \colon$ $ \inssemistabft(D)   \rightarrow\xcal(D) $, defined by
$\star \mapsto \qspec^\star(D)$,
are homeomorphisms and  are inverse of each other. 
\item
If $\varphi:\spec(D) \hookrightarrow \xcal(D)$ is the canonical embedding defined in \ref{X(X)}(2), then $\boldsymbol{\s^\sharp} \circ \varphi = \s$.
\end{itemize}

\begin{oss}\label{qspec-inv}
Let $\star $ be a semistar operation of finite type on the integral domain $D$.
 It is well known that $\qmax^\star(D) = \qmax^{\stt}(D)$ and  $\widetilde{\star} =\s_{\tiny \qspec^{\star}\!(D)} = \s_{\tiny \qmax^{\star}\!(D)}  = \s_{\tiny \qmax^{\stt}(D)} $
 \cite[Lemma 2.4 and Corollaries 2.7 and 3.5]{fo-lo-2003}.
  Moreover, since $\qspec^{\widetilde\star}(D)$ is closed in the inverse topology of $\spec(D)$ and the maps $\Delta,\s^\sharp$ are homeomorphisms (see above), it follows  that $\chiusinv(\qspec^\star(D)) =\qspec^{\widetilde\star}(D)$.
  Therefore,  by  \cite[Proposition 5.8]{FiSp}, we also 
  have $$\widetilde{\star}=\s_{\tiny\chiusinv(\qspec^\star(D))} =\s_{\tiny\qspec^{\widetilde\star}(D)}.$$
\end{oss}

\vskip -15pt

\subsection{The set of overrings of an integral domain }\label{sect:overrings}

Let $\overr(D)$ be the set of all overrings of $D$, endowed with the topology whose   basic open sets are of the form {\rm\texttt{B}}$(x_1, x_2, \dots, x_r)  := \overr(D[x_1, x_2,\ldots,x_n])$,  for $x_1, x_2, \ldots,x_n$  varying in $ K$ \cite[Ch. VI, \S 17]{zs}. For recent investigations on topological spaces of  overrings of an integral domain see, for instance,  \cite{fi-fo-sp-survey}, \cite{FiFoSp-JPAA}, \cite{ol}, \cite{ol-11}, \cite{ol-15-pacjmath}, \cite{ol-15-jppa}.

It is known that:
\begin{itemize}
\item[\rm(1)] The topological space $\overr(D)$ is a spectral space \cite[Proposition 3.5]{Fi} and the map $\iota:\overr(D)\hookrightarrow\SStarf(D)$, defined by $\iota(T) := \wedge_{\{T\}}$, for each $T \in \overr(D)$, is a topological embedding  \cite[Proposition 2.5]{FiSp}.
\item[\rm(2)] The map $\pi: \SStarf(D)\rightarrow\overr(D)$, defined by $\pi(\star):=D^\star$,  for any $\star\in \SStarf(D)$, is a  topological retraction \cite[Proposition 3.2]{fi-fo-sp-survey}. 
\end{itemize}

\section{The space of semigroup primes}

Let $R$ be a ring. The purpose of the present section is to investigate a   natural spectral extension of $\spec({R})$ which is intermediate between $\spec({R})$  and $\xcal({R})$, namely the embeding of the prime spectrum into the set of semigroup primes.

Using the terminology of \cite{ol},  we   recall the following definition:

\begin{defin}
A \emph{semigroup prime} is a nonempty proper subset $\mathscr{Q}$ of a ring $R$ such that:
\begin{enumerate}
\item[(a)]  for each $r \in R$ and for each $\pi \in \mathscr{Q}$, $r\pi \in \mathscr{Q}$;
\item[(b)] for all $\sigma,\ \tau \in R \setminus \mathscr{Q}$,  $\sigma\tau \in R \setminus \mathscr{Q}$.
\end{enumerate}
\end{defin}
 
Obviously, every prime ideal of $R$ is also a semigroup prime of $R$. More generally,  if  $Y$ is a nonempty collection of prime ideals of $R$, then  $\mathscr{P}(Y):=\bigcup \{P \in   \spec(R) \mid P\in Y\}$ is a semigroup prime of $R$.
A more precise result is given next.

\begin{lemma}\label{semigroupprime} Let $\mathscr{Q}$  be a proper subset  of a ring $R$. Then, $\mathscr{Q}$ is a semigroup prime  of $R$  if and only if there exists a nonempty collection of prime ideals $Y$ of $R$ such that
$ \mathscr{Q} = \mathscr{P}(Y)$.
\end{lemma}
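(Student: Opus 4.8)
The plan is to prove both implications, the ``if'' direction being essentially the observation already made before the lemma, and the ``only if'' direction being the substantive part.

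\textbf{The easy direction.} Suppose $\mathscr{Q} = \mathscr{P}(Y)$ for some nonempty collection $Y$ of prime ideals of $R$. I would verify conditions (a) and (b) directly. For (a): if $\pi \in \mathscr{Q}$, then $\pi \in P$ for some $P \in Y$, hence $r\pi \in P \subseteq \mathscr{Q}$ for every $r \in R$, since $P$ is an ideal. For (b): if $\sigma, \tau \in R \setminus \mathscr{Q}$, then for \emph{every} $P \in Y$ we have $\sigma \notin P$ and $\tau \notin P$, so $\sigma\tau \notin P$ because $P$ is prime; since this holds for all $P \in Y$, we get $\sigma\tau \notin \mathscr{Q}$. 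One also notes $\mathscr{Q}$ is a proper subset (e.g. $1 \notin \mathscr{Q}$ since $1 \notin P$ for any proper prime) and nonempty (each $P \in Y$ is nonzero, or at least contains $0$). So $\mathscr{Q}$ is a semigroup prime.

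\textbf{The hard direction.} Suppose $\mathscr{Q}$ is a semigroup prime. The natural candidate is $Y := \{P \in \Spec(R) \mid P \subseteq \mathscr{Q}\}$, and I must show (i) $Y \neq \emptyset$, and (ii) $\mathscr{P}(Y) = \bigcup Y = \mathscr{Q}$. The inclusion $\mathscr{P}(Y) \subseteq \mathscr{Q}$ is immediate from the definition of $Y$. The content is the reverse inclusion: for each $\pi \in \mathscr{Q}$, I need a prime ideal $P$ with $\pi \in P \subseteq \mathscr{Q}$. The key structural facts are that, by condition (b), the set $S := R \setminus \mathscr{Q}$ is multiplicatively closed, and by condition (a), $\mathscr{Q}$ is closed under multiplication by arbitrary ring elements (it is a ``semigroup ideal''). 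Given $\pi \in \mathscr{Q}$, consider the principal ideal $(\pi)$; I claim $(\pi) \subseteq \mathscr{Q}$ — indeed every element of $(\pi)$ has the form $r\pi$, which lies in $\mathscr{Q}$ by (a). Hence $(\pi) \cap S = \emptyset$. Now apply the standard result that an ideal disjoint from a multiplicatively closed set can be enlarged to a prime ideal still disjoint from that set (a Zorn's lemma argument on ideals containing $(\pi)$ and disjoint from $S$, whose maximal element is prime). This yields a prime $P$ with $\pi \in (\pi) \subseteq P$ and $P \cap S = \emptyset$, i.e. $P \subseteq \mathscr{Q}$, so $P \in Y$ and $\pi \in \mathscr{P}(Y)$. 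This simultaneously shows $Y \neq \emptyset$ (as $\mathscr{Q}$ is nonempty, pick any $\pi \in \mathscr{Q}$). Thus $\mathscr{Q} = \mathscr{P}(Y)$.

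\textbf{Main obstacle.} There is no serious obstacle; the only point requiring care is invoking the prime avoidance / prime existence lemma in the correct form (ideal disjoint from a multiplicative set extends to a prime with the same property), and checking that $(\pi) \subseteq \mathscr{Q}$ — which is exactly what condition (a) gives, including the case $r = 0$ so that $0 \in \mathscr{Q}$, consistent with $\mathscr{Q}$ being nonempty and $S$ not containing $0$ being automatic since $0 \cdot 0 = 0 \notin S$ would otherwise contradict multiplicative closure only if $0 \in S$. One should also remark that the collection $Y$ need not be an antichain, but the statement does not require that.
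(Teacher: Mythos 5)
Your proof is correct and takes essentially the same route as the paper: both come down to the fact that $R\setminus\mathscr{Q}$ is a (saturated) multiplicatively closed set, so that $\mathscr{Q}$ must be a union of prime ideals. The only difference is that the paper quotes Kaplansky's theorem on saturated multiplicatively closed sets for this last step (and treats the ``if'' direction as the remark preceding the lemma), whereas you re-prove it directly, using condition (a) to get $(\pi)\subseteq\mathscr{Q}$ and then the standard Zorn's-lemma fact that an ideal disjoint from a multiplicatively closed set extends to a prime ideal still disjoint from it.
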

\begin{proof} We just need to prove the ``only if'' part.
 For each   semigroup prime $\mathscr{Q}$ of $R$,  $R \setminus \mathscr{Q}$ is a multiplicatively closed subset of $R$ and it is also saturated, since if $\alpha\beta \in R \setminus \mathscr{Q}$ 
then, from (a) of the previous definition, it follows immediately that both  $\alpha$ and $\beta$ belong to $ R \setminus \mathscr{Q}$.  Since a saturated multiplicatively closed set is the complement of the union of prime ideals  \cite[Theorem 2]{kap},  if $Y$ is a nonempty set of prime ideals of $R$ such that $ R \setminus \mathscr{P}(Y)$ coincides with the saturated multiplicatively closed set $R \setminus \mathscr{Q}$, then $ \mathscr{Q} = \mathscr{P}(Y)$.
 \end{proof}
 
Let $\scal( R) := \{\mathscr{Q} \mid \mathscr{Q} \mbox{ is a semigroup prime of } R\}$. As in \cite[(2.3)]{ol}, the set 
$\scal( R)$ can be endowed with a {\it hull-kernel topology}, by taking   as a basis for the open sets the subsets
$$
\boldsymbol{U}(x_1, x_2, \dots, x_n) := \{ \mathscr{Q} \mid x_i \notin \mathscr{Q} \mbox{ for some }  i,\ 1\leq i \leq n \}\,,
$$
where $x_1, x_2, \dots, x_n \in R$.

\begin{prop}\label{embedd}
Let $R$ be a ring. 
\begin{enumerate}
\item[{\rm (1)}] The set $\scal( R)$ of semigroup primes of $R$  with the  hull-kernel topology is a spectral space.
\item[{\rm (2)}]  The collection of sets $\{\boldsymbol{U}(x)\mid x\in R \}$ is a basis of open and quasi-compact subspaces of $\scal(R)$.
\item[{\rm (3)}]  The  set theoretic inclusion $i: \spec( R)\hookrightarrow \scal(R)$  is a spectral embedding.

\end{enumerate}
\end{prop}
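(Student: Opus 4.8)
The strategy is to use Hochster's topological characterization of spectral spaces, recalled in \S1.1, and to verify the four conditions directly on $\scal(R)$, then deduce parts (2) and (3) along the way. The plan is as follows.

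\textbf{Step 1: A basis of quasi-compact opens closed under finite intersection.} First I would observe that $\boldsymbol{U}(x_1,\dots,x_n)=\bigcup_{i=1}^n\boldsymbol{U}(x_i)$, so the sets $\boldsymbol{U}(x)$, $x\in R$, already form a basis. For closure under finite intersection, the key algebraic fact is that $\boldsymbol{U}(x)\cap\boldsymbol{U}(y)=\boldsymbol{U}(xy)$: a semigroup prime $\mathscr{Q}$ avoids both $x$ and $y$ iff it avoids $xy$, precisely by condition (b) in the definition (and (a) for the converse). Hence $\{\boldsymbol{U}(x)\mid x\in R\}$ is a basis closed under finite intersections. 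For quasi-compactness of $\boldsymbol{U}(x)$ (in particular $\boldsymbol{U}(1)=\scal(R)$, giving quasi-compactness of the whole space), I expect the cleanest route is the ultrafilter/patch approach: realize $\scal(R)$ as a subspace of $\{0,1\}^R$ (identifying $\mathscr{Q}$ with its characteristic function, or rather $R\setminus\mathscr{Q}$ with the characteristic function of a saturated multiplicatively closed set), show that the defining conditions (a), (b), properness, nonemptiness translate into closed conditions in the product topology, so $\scal(R)$ is closed in the compact space $\{0,1\}^R$, hence quasi-compact; and each $\boldsymbol{U}(x)$ is the trace of a clopen set, hence also closed, hence quasi-compact. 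This simultaneously gives part (2).

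\textbf{Step 2: $T_0$ and sober.} The $T_0$ axiom is immediate since distinct semigroup primes are distinguished by some element $x$ lying in one but not the other, giving a $\boldsymbol{U}(x)$ containing exactly one of them. For sobriety, I would identify the specialization order: $\mathscr{Q}_1\leq\mathscr{Q}_2$ iff $\mathscr{Q}_1\subseteq\mathscr{Q}_2$ (smaller semigroup prime is the generic point), because $\boldsymbol{U}(x)$ is an open neighborhood basis. Then, given an irreducible closed set $C$, I would take $\mathscr{Q}^*:=\bigcap\{\mathscr{Q}\mid\mathscr{Q}\in C\}$ and argue, using Lemma~\ref{semigroupprime} and Kaplansky's theorem on saturated multiplicatively closed sets, that $\mathscr{Q}^*$ is again a semigroup prime (an intersection of complements of saturated m.c. sets corresponds to the saturated m.c. set generated by the union — one must check it stays proper, which uses irreducibility, equivalently that $C$ cannot be covered by two proper closed subsets), and that $\mathscr{Q}^*\in C$ is its generic point. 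This is the step I expect to be the main obstacle, since one must be careful that the intersection is nonempty-complement (i.e. the corresponding multiplicatively closed set does not contain $0$), and this is exactly where irreducibility of $C$ must be invoked rather than just closedness.

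\textbf{Step 3: Parts (2) and (3).} Part (2) is already established in Step 1. For part (3): the map $i$ sends a prime ideal $P$ to itself, which is a semigroup prime by the remark following the definition; $i$ is injective and order-preserving. It is continuous and a homeomorphism onto its image because $i^{-1}(\boldsymbol{U}(x_1,\dots,x_n))=\texttt{D}((x_1,\dots,x_n))$, the standard basic open of $\spec(R)$, so the basic opens correspond exactly; and it is spectral because these $\boldsymbol{U}(x)$ are quasi-compact by Step 1 and their preimages $\texttt{D}(x)$ are quasi-compact in $\spec(R)$. Hence $i$ is a spectral embedding, completing the proof.
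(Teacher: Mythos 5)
Your proposal is correct, but it takes a genuinely different route from the paper. The paper does not check Hochster's axioms one by one: after the observation $\boldsymbol{U}(x)\cap\boldsymbol{U}(y)=\boldsymbol{U}(xy)$ (which you also make), it invokes the ultrafilter criterion for spectrality of \cite[Corollary 3.3]{Fi}, verifying that for every ultrafilter $\ms U$ on $\scal(R)$ the set $\mathscr{Q}_{\ms U}:=\{r\in R\mid \scal(R)\setminus\boldsymbol{U}(r)\in\ms U\}$ is a semigroup prime realizing the required membership pattern; part (2) is then quoted from \cite{Fi} (the $\boldsymbol{U}(x)$ are clopen in the constructible topology, hence quasi-compact), and part (3) is the same two-line computation you give. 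Your plan instead proves spectrality directly from Hochster's topological characterization: quasi-compactness of $\scal(R)$ and of each $\boldsymbol{U}(x)$ via the embedding into $\{0,1\}^R$ (which is exactly the compactness mechanism hidden inside the ultrafilter criterion, made explicit), and sobriety by hand, with $\mathscr{Q}^*:=\bigcap\{\mathscr{Q}\mid\mathscr{Q}\in C\}$ as the generic point of an irreducible closed set $C$. What you gain is a self-contained argument that does not outsource sobriety to \cite{Fi}; what the paper gains is brevity and a reusable ultrafilter technique.

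Two details of your plan should be tightened, though neither is a genuine gap. First, in the $\{0,1\}^R$ embedding, nonemptiness and properness are existential, hence open rather than closed, conditions; replace them by the clopen conditions $0\in\mathscr{Q}$ and $1\notin\mathscr{Q}$, which are equivalent to them in the presence of (a) and (b), so that $\scal(R)$ is indeed closed in the product. Second, in the sobriety step irreducibility is not needed for properness or nonemptiness of $\mathscr{Q}^*$ (these are automatic: $\mathscr{Q}^*\subseteq\mathscr{Q}\subsetneq R$ for any $\mathscr{Q}\in C$, and $0\in\mathscr{Q}^*$); it is needed precisely for condition (b): if $\sigma,\tau\notin\mathscr{Q}^*$, then $C\cap\boldsymbol{U}(\sigma)$ and $C\cap\boldsymbol{U}(\tau)$ are nonempty open subsets of the irreducible set $C$, hence meet, so $C\cap\boldsymbol{U}(\sigma\tau)\neq\emptyset$ and $\sigma\tau\notin\mathscr{Q}^*$. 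Since every closed set of $\scal(R)$ has the form $\{\mathscr{Q}\mid S\subseteq\mathscr{Q}\}$ for some $S\subseteq R$, one gets $\mathscr{Q}^*\in C$ and $\chius(\{\mathscr{Q}^*\})=C$, completing your Step 2.
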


\begin{proof}
(1) Since $R \setminus \mathscr Q$ is a saturated multiplicative set of $R$  for each  $\mathscr Q\in \scal( R)$, then  $\boldsymbol{U}(xy)=\boldsymbol{U}(x) \cap \boldsymbol{U}(y)$ for each pair $x, y \in R$. 
By definition, it follows easily that a basis of open sets for $\scal(R)$ is given by $\{\boldsymbol{U}(x) \mid x\in R \}$.

By \cite[Corollary 3.3]{Fi}, to show that $\scal(R)$ is a spectral space it suffices to show that, for any ultrafilter $\ms U$ on $\scal(R)$, the set
$$
\{\mathscr Q\in \scal(R) \mid \forall x\in R,\, \;  \mathscr Q\in \boldsymbol{U}(x)\Leftrightarrow\boldsymbol{U}(x)\in\ms U \,  \}
$$
is nonempty. 
Set $\mathscr Q_{\ms U}:=\{r\in R \mid \scal(R)\setminus \boldsymbol{U}(r)\in \ms U \}$. 
An easy argument shows that $\mathscr Q_{\ms U}$ is a semigroup prime of $R$. 
 Moreover,  by definition,  for each $x\in R$, $\mathscr Q_{\ms U}\in \boldsymbol{U}(x)$ if and only if $\boldsymbol{U}(x)\in \ms U$. 

(2) By \cite[Propositions 2.11, 3.1(3,b) and 3.2]{Fi}, the sets $\boldsymbol{U}(x)$ are clopen, with respect to the constructible topology of $\scal(R)$ and, a fortiori, they are quasi-compact with respect to the hull-kernel topology. 
 
(3) 
 The conclusion follows from the fact that  the  hull-kernel topology of $\scal( R)$ induces the Zariski topology on $\spec( R) $, since $i^{-1}(\boldsymbol{U}(x)) =
\boldsymbol{U}(x) \cap \spec( R) = \texttt{D}(x)$ and  from the fact   that $i(\texttt{D}(x)) = \boldsymbol{U}(x) \cap i(\spec( R))$, for each $x \in R$.
\end{proof}

\begin{oss}
Let $S$ be a semigroup. A \emph{prime ideal} of $S$ is a nonempty  proper subset $I\subseteq S$ such that $xs\in I$ for every $x\in I$, $s\in S$ and such that $st  \in S\setminus I$ for every $s,t  \in S\setminus I$   (see, for example, \cite{gri,ki}).  Under this terminology, a prime semigroup of a ring $R$ is just a prime ideal of the multiplicative semigroup $(R,\cdot)$.

The topology we introduced above in the case of prime semigroups of a ring can be extended naturally to the set $\scal(S)$ of the prime ideals of the semigroup $S$; likewise, the proof of Proposition \ref{embedd}(1) can be transferred verbatim to the case of semigroups, showing  the slightly more general result  that $\scal(S)$ is a spectral space. 
\end{oss}
  
  \begin{oss}   The subspace  $\spec( R)$ of $\scal({R})$ is  dense  in $\scal({R})$. 
  In fact, the closure of $\spec( R)$ is the set of all   $\mathscr Q \in\scal({R})$  containing the nilradical of $R$, which is  $\scal({R})$  (since each $\mathscr Q$ contains at least one prime $P \in \spec( R)$). 

Following \cite[D\'efinition (2.6.3)]{EGA}, recall that a subset $X_0$ of a topological space $X$ is said to be \emph{very dense in $X$} if, for any open sets $U,V\subseteq X$, the equality $U\cap X_0=V\cap X_0$ implies $U=V$, that is, in our setting, if the map $\boldsymbol{U} \mapsto  \boldsymbol{U}\cap \spec( R)$, from the open subsets of $\scal({R})$ to the open subsets of $\spec( R)$, is injective. Under this terminology, $ \spec( R)$  is  not very dense  in\ $\scal({R})$.
  For instance, consider a 1-dimensional B\'ezout domain $D$ with exactly two maximal ideals, say $M$ and $N$. Then, $\scal(D)$ has a maximal element (namely $M\cup N$) that is a closed point but does not belong to $\spec(D)$.
\end{oss}

  Given a ring homomorphism $f:R_1 \rightarrow R_2$, we can canonically associate  to $f$ a map
\begin{equation}\label{eq:scalf}
\begin{aligned}
\scal(f)\colon\scal(R_2) & \longrightarrow \scal(R_1)\\
\mathscr{Q} & \longmapsto f^{-1}(\mathscr{Q}).
\end{aligned}
\end{equation}
We investigate   next  the properties of this map.

\begin{prop}\label{prop:scal-functor}
Let $f:R_1\rightarrow R_2$ be a ring homomorphism, let $\scal(f)$ be the map defined above and let $f^a:\spec(R_2)\rightarrow\spec(R_1)$ be the   continuous map canonically associated to $f$.  Assume that $\scal(R_1)$ and $\scal(R_2)$ are endowed with  the hull-kernel topology. Then:
\begin{enumerate}
\item[{\rm (1)}]\label{prop:scal-functor:wd} $\scal(f)$ is well-defined,   (continuous) and spectral;
\item[{\rm (2)}]\label{prop:scal-functor:diagram} if $i_k:\Spec(R_k)\longrightarrow\scal(R_k)$ is the set-theoretic inclusion    ($k=1,2$), then $\scal(f)\circ i_2=i_1\circ f^a$;
\item[{\rm (3)}]\label{prop:scal-functor:functor} the assignment $R\mapsto\scal(R)$, $f\mapsto\scal(f)$, is a functor from the category of rings to the category of spectral spaces.
\end{enumerate}
\end{prop}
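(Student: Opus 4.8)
The plan is to verify the three items in order, with most of the work concentrated in (1). For well-definedness of $\scal(f)$, I would take $\mathscr{Q}\in\scal(R_2)$ and check that $f^{-1}(\mathscr{Q})$ satisfies conditions (a) and (b) of the definition of semigroup prime, plus properness and nonemptiness. Conditions (a) and (b) are immediate from the multiplicativity of $f$: if $r\in R_1$ and $\pi\in f^{-1}(\mathscr{Q})$ then $f(r\pi)=f(r)f(\pi)\in\mathscr{Q}$; and if $\sigma,\tau\notin f^{-1}(\mathscr{Q})$ then $f(\sigma),f(\tau)\notin\mathscr{Q}$, so $f(\sigma)f(\tau)=f(\sigma\tau)\notin\mathscr{Q}$ by property (b) for $\mathscr{Q}$. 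Properness holds because $1\notin\mathscr{Q}$ forces $1\notin f^{-1}(\mathscr{Q})$; nonemptiness follows since $0\in\mathscr{Q}$ (as $\mathscr{Q}$ is nonempty and closed under multiplication by $R_2$, in particular by $0$), hence $0\in f^{-1}(\mathscr{Q})$. (Alternatively, invoke Lemma \ref{semigroupprime}: write $\mathscr{Q}=\mathscr{P}(Y)$ for a nonempty $Y\subseteq\spec(R_2)$, and observe $f^{-1}(\mathscr{Q})=\bigcup\{f^{-1}(P)\mid P\in Y\}=\mathscr{P}(f^a(Y))$, which is a semigroup prime since $f^a(Y)$ is a nonempty set of primes of $R_1$.)

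For continuity and spectrality, I would compute the preimage of a basic quasi-compact open set. By Proposition \ref{embedd}(2), the sets $\boldsymbol{U}(x)$ with $x\in R_1$ form a basis of open and quasi-compact subspaces of $\scal(R_1)$, so it suffices to check $\scal(f)^{-1}(\boldsymbol{U}(x))$ for $x\in R_1$. Directly from the definitions,
$$
\scal(f)^{-1}(\boldsymbol{U}(x)) = \{\mathscr{Q}\in\scal(R_2)\mid x\notin f^{-1}(\mathscr{Q})\} = \{\mathscr{Q}\in\scal(R_2)\mid f(x)\notin\mathscr{Q}\} = \boldsymbol{U}(f(x)),
$$
which is open and quasi-compact in $\scal(R_2)$, again by Proposition \ref{embedd}(2). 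This gives both continuity and the spectral-map property in one stroke. (One should also note that $\scal(f)$ is a map of spectral spaces by part (1) of Proposition \ref{embedd}, so ``spectral map'' makes sense.)

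For (2), I would evaluate both sides at a prime $P\in\spec(R_2)$: on one side $\scal(f)(i_2(P))=f^{-1}(P)$, which is precisely the prime $f^a(P)$ of $R_1$; on the other side $i_1(f^a(P))=f^a(P)$ viewed inside $\scal(R_1)$. These coincide as subsets of $R_1$, so the diagram commutes. For (3), functoriality is now routine: $\scal(\ude_R)=\ude_{\scal(R)}$ since $\ude_R^{-1}(\mathscr{Q})=\mathscr{Q}$, and for composable $f:R_1\to R_2$, $g:R_2\to R_3$ one has $\scal(g\circ f)(\mathscr{Q})=(g\circ f)^{-1}(\mathscr{Q})=f^{-1}(g^{-1}(\mathscr{Q}))=(\scal(f)\circ\scal(g))(\mathscr{Q})$, so $\scal$ reverses composition, i.e.\ it is a contravariant functor into spectral spaces, the codomain being legitimate by part (1) here together with Proposition \ref{embedd}(1). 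I do not anticipate a genuine obstacle here; the only point requiring a moment's care is confirming that $f^{-1}(\mathscr{Q})$ is nonempty and proper (so that it actually lands in $\scal(R_1)$ rather than being $\emptyset$ or all of $R_1$), which is why I would either argue via $0$ and $1$ directly or route through Lemma \ref{semigroupprime}.
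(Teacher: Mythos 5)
Your proposal is correct and follows essentially the same route as the paper: check conditions (a) and (b) for $f^{-1}(\mathscr{Q})$ directly, establish the key identity $\scal(f)^{-1}(\boldsymbol{U}(x))=\boldsymbol{U}(f(x))$ to get continuity and spectrality via the basis of quasi-compact opens from Proposition \ref{embedd}, and dispatch (2) and (3) by direct computation. Your explicit verification that $f^{-1}(\mathscr{Q})$ is nonempty and proper is a small point the paper leaves implicit, and it is handled correctly.
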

\begin{proof}
(1)   Let $\mathscr{Q}$ be a semigroup prime of  $ R_2$,  let $r\in R_1$ and $\pi\in f^{-1}(\mathscr{Q})$. Then, $f(\pi r)=f(\pi)f(r)\in f(r)\mathscr{Q}\subseteq\mathscr{Q}$, so that $r\pi\in f^{-1}(\mathscr{Q})$; moreover, if $\sigma,\tau\notin f^{-1}(\mathscr{Q})$, then $f(\sigma),f(\tau)\notin\mathscr{Q}$ and thus $f(\sigma)f(\tau)\notin\mathscr{Q}$, that is, $\sigma\tau\notin f^{-1}(\mathscr{Q})$. Hence, $\scal(f)$ is well-defined. Moreover, $\scal(f)^{-1}(\Ubold(x))=\Ubold(f(x))$   for each $x \in R_1$,  and thus $\scal(f)$ is continuous.
 By the last part of  Proposition \ref{embedd}(1),   the collection $\{\Ubold(y)\mid y\in A\}$ is a basis of  quasi-compact subsets of $\scal(A)$, for any ring $A$.   Thus,  the previous  reasoning implies that $\scal(f)$ is a spectral map.

(2) is straightforward.

(3) follows from the previous points and the fact that,  given two ring homomorphisms  $f:R_1\rightarrow R_2$ and $g:R_2\rightarrow R_3$, $\scal(g\circ f)=\scal(f)\circ\scal(g)$,  which is a direct consequence of the definitions.
\end{proof}

 We now start the study of the relationship between the spectral spaces $\scal({R})$  and $\xcal({R})$.

\begin{prop}\label{embedd-II} 
Let $R$ be a ring. 
\begin{enumerate}
 \item[{\rm (1)}] For each $\mathscr{Q} \in \scal( R)$, set $\Sigma_{\mathscr{Q} } := R \setminus\mathscr{Q} $ and $R_{\mathscr{Q}} := \Sigma_{\mathscr{Q} }^{-1}R$.
The map 

\begin{equation*}
\begin{aligned}
j\colon\scal(R) & \longrightarrow\xcal(R)\\
\mathscr{Q} & \longmapsto \lambda^a(\spec(R_{\mathscr{Q}})),
\end{aligned}
\end{equation*}
where $\lambda^a:\spec(R_{\mathscr{Q}})\rightarrow\spec(R)$ is the spectral map associated to the localization homomorphism $\lambda:R\rightarrow R_{\mathscr{Q}}$, is a topological embedding. Moreover, $j(\mathscr{Q}) = \{ P \in \spec({R}) \mid P \subseteq \mathscr{Q}\}$, for each $  \mathscr{Q} \in \scal({R})$.

\item[{\rm (2)}]  The   canonical spectral embedding $\varphi: \spec( R) \hookrightarrow \xcal( R)$  \cite[Theorem 3.3(2)]{FiFoSp-X(X)} coincides with $j\circ i$.
\end{enumerate}
\end{prop}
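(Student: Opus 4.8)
The plan is to first describe the image $j(\mathscr{Q})$ explicitly, and then read off all the required properties from that description. For (1), observe that $\lambda^a(\spec(R_{\mathscr{Q}}))$ is the set of primes of $R$ that are disjoint from $\Sigma_{\mathscr{Q}}=R\setminus\mathscr{Q}$, and a prime ideal is disjoint from $R\setminus\mathscr{Q}$ precisely when it is contained in $\mathscr{Q}$; this already gives $j(\mathscr{Q})=\{P\in\spec(R)\mid P\subseteq\mathscr{Q}\}$. Using that $\Sigma_{\mathscr{Q}}$ is a \emph{saturated} multiplicative set (proof of Lemma~\ref{semigroupprime}), I would check that for every $x\in\mathscr{Q}$ the ideal $(x)$ is disjoint from $\Sigma_{\mathscr{Q}}$ --- otherwise saturation would force $x\in\Sigma_{\mathscr{Q}}$ --- hence is contained in some prime $P\subseteq\mathscr{Q}$; therefore $\mathscr{Q}=\bigcup\{P\mid P\in j(\mathscr{Q})\}$. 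This recovers $\mathscr{Q}$ from $j(\mathscr{Q})$, and so yields injectivity of $j$ at once.

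Next I would verify that $j$ takes values in $\xcal(R)$. Since $0\in\mathscr{Q}$ for every semigroup prime (multiply any $\pi\in\mathscr{Q}$ by $0$), the ring $R_{\mathscr{Q}}$ is nonzero, so $j(\mathscr{Q})=\lambda^a(\spec(R_{\mathscr{Q}}))\neq\emptyset$; and $j(\mathscr{Q})$, being the continuous image of the quasi-compact space $\spec(R_{\mathscr{Q}})$, is quasi-compact. It is also closed under generizations, by the description above ($P'\subseteq P\subseteq\mathscr{Q}$ forces $P'\subseteq\mathscr{Q}$). Since a quasi-compact, generization-closed subspace of $\spec(R)$ is inverse-closed \cite[Proposition~2.6]{fifolo2}, we conclude $j(\mathscr{Q})\in\xcal(R)$.

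For continuity and openness onto the image, the crucial point is to rewrite the condition ``$j(\mathscr{Q})\subseteq\texttt{D}(J)$'' as a condition on membership in $\mathscr{Q}$. For a finitely generated ideal $J$ of $R$, $j(\mathscr{Q})\subseteq\texttt{D}(J)$ means that no prime $P$ satisfies $J\subseteq P\subseteq\mathscr{Q}$; and such a $P$ exists if and only if $J$ is disjoint from the multiplicative set $\Sigma_{\mathscr{Q}}$ (one implication is trivial, the other is the standard prime-avoidance argument), i.e. if and only if $J\subseteq\mathscr{Q}$. Hence $j^{-1}(\ucal(\texttt{D}(J)))=\{\mathscr{Q}\mid J\not\subseteq\mathscr{Q}\}=\bigcup_{a\in J}\Ubold(a)$, which is open, so $j$ is continuous; in particular $j^{-1}(\ucal(\texttt{D}(x)))=\Ubold(x)$ for each $x\in R$. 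Since $\{\Ubold(x)\mid x\in R\}$ is a basis of $\scal(R)$ (Proposition~\ref{embedd}) and $j$ is injective, $j(\Ubold(x))=j(j^{-1}(\ucal(\texttt{D}(x))))=\ucal(\texttt{D}(x))\cap j(\scal(R))$ is open in $j(\scal(R))$; thus $j$ is a homeomorphism onto its image, which proves (1). For (2), the inclusion $i$ sends $P\in\spec(R)$ to itself regarded as a semigroup prime, so by the formula in (1), $j(i(P))=\{Q\in\spec(R)\mid Q\subseteq P\}$, which is precisely the closure under generizations of $\{P\}$ in $\spec(R)$, i.e. $\varphi(P)$ by definition.

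The step I expect to be the main obstacle is the topological one, namely identifying $j^{-1}(\ucal(\texttt{D}(J)))$. The subtlety is that $\mathscr{Q}$ is only a multiplicative, not additive, prime, so ``$J\subseteq\mathscr{Q}$'' is strictly stronger than ``each generator of $J$ lies in $\mathscr{Q}$'', and the equivalence used above really relies on the multiplicative structure of $\Sigma_{\mathscr{Q}}$ together with prime avoidance; getting this reduction right is what makes both continuity and openness come out cleanly.
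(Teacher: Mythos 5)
Your proposal is correct and follows essentially the same route as the paper: both identify $j(\mathscr{Q})=\{P\in\spec(R)\mid P\subseteq\mathscr{Q}\}$, show $j^{-1}(\ucal(\texttt{D}(J)))$ is open by producing, for a prime semigroup with $J\subseteq\mathscr{Q}$, a prime $P$ with $J\subseteq P\subseteq\mathscr{Q}$, and conclude the embedding from $j(\boldsymbol{U}(x))=j(\scal(R))\cap\ucal(\texttt{D}(x))$, while your extra verification that $j(\mathscr{Q})$ is nonempty, quasi-compact and generization-closed (hence inverse-closed) is a welcome detail the paper leaves implicit. The only slip is terminological: the separation step you invoke is the Krull--Zorn fact that an ideal disjoint from a multiplicative set is contained in a prime disjoint from it, not prime avoidance.
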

\begin{proof}
(1) The map
$j$ is clearly injective. In order to prove that $j$ is continuous we have to verify that, given a  nonzero  finitely generated ideal $J$ of $R$, then
$$
H:=j^{-1}(\ucal(\texttt{D}(J)))=\{\ms Q\in \scal (R)\mid j(\ms Q)\subseteq \texttt{D}(J) \}
$$ 
is open in $\scal(R)$. 
Take a point $\ms Q\in H$ and assume that $J \subseteq \ms Q$. Then $J$ is disjoint from $\Sigma_{\mathscr{Q} } $, and thus there exists a prime ideal $P$ of $R$ disjoint from $\Sigma_{\mathscr{Q} } $ and such that $J \subseteq P$.
 On the other hand, keeping in mind that $\ms Q\in H$ and $P \cap \Sigma_{\mathscr{Q} }=\emptyset$, we have $P \in j(\ms Q)\subseteq \texttt{D}(J)$, contradiction. 
 This shows that $J \nsubseteq \ms Q$, and thus there exists an element $x\in J\setminus\ms Q$. 
 It follows that $\ms Q\in \boldsymbol{U}(x)$ and,  moreover,  $\boldsymbol{U}(x) \subseteq H$.
  Since $\{\boldsymbol{U}(x)\mid x\in R \}$ is a basis of open sets for $\scal(R)$, it follows that $H$ is open and $j$ is continuous. 
  Now, the fact that $j$ is a topological embedding follows immediately from the equality $j(\boldsymbol{U}(x))=j(\scal(R))\cap \ucal(\texttt{D}(x))$ that holds for each $x\in R$. 
  
For the last statement, we have $P \in \lambda^a(\spec(R_{\mathscr{Q}}))$ if and only if $ P \cap  \Sigma_{\mathscr{Q}} = \emptyset$, i.e., if and only if $P \subseteq \mathscr{Q}$.

(2) is a straightforward consequence of the definitions.
\end{proof}

\begin{prop}\label{prop:scalf}
Let $f:R_1\rightarrow R_2$ be a ring homomorphism,  $f^a:\spec(R_2)\rightarrow\spec(R_1)$  the associated map of spectra,  $\scal(f)$ the map defined in \eqref{eq:scalf},
$\xcal(f^a): \xcal(R_2) \rightarrow \xcal(R_1)$ the spectral map defined in   \cite[Proposition 4.1]{FiFoSp-X(X)} and let $i_k: \spec(R_k) \rightarrow \scal(R_k)$ (respectively, $j_k: \scal(R_k) \rightarrow \xcal(R_k)$) the spectral embedding defined in Proposition \ref{embedd} (respectively, Proposition \ref{embedd-II}), for $k=1,2$. Then, the diagram:
\begin{equation}\label{eq:map-scalxcal}
\begin{CD}
\spec(R_2) @>i_2>> \scal(R_2) @>j_2>> \xcal(R_2)\\
@V{f^a}VV     @V{\scal(f)}VV      @V{\xcal(f^a)}VV\\
\spec(R_1) @>i_1>> \scal(R_1) @>j_1>> \xcal(R_1)
\end{CD}
\end{equation}
commutes.
\end{prop}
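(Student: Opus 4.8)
The plan is to prove that the two squares of the diagram \eqref{eq:map-scalxcal} commute separately. The left-hand square, $\scal(f)\circ i_2 = i_1\circ f^a$, is exactly Proposition \ref{prop:scal-functor}(2): for a prime ideal $\mathscr{Q}$ of $R_2$ one has $i_1(f^a(\mathscr{Q})) = f^{-1}(\mathscr{Q}) = \scal(f)(i_2(\mathscr{Q}))$, using that $f^{-1}$ of a prime ideal is a prime ideal and that both $f^a$ and $\scal(f)$ are given by $\mathscr{Q}\mapsto f^{-1}(\mathscr{Q})$. So everything reduces to the right-hand square, $\xcal(f^a)\circ j_2 = j_1\circ\scal(f)$.

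I would verify this pointwise: fix $\mathscr{Q}\in\scal(R_2)$ and compare the two resulting subsets of $\spec(R_1)$. By \eqref{eq:scalf} and the last assertion of Proposition \ref{embedd-II}(1),
\[
j_1(\scal(f)(\mathscr{Q})) = j_1(f^{-1}(\mathscr{Q})) = \{P\in\spec(R_1)\mid P\subseteq f^{-1}(\mathscr{Q})\},
\]
while $j_2(\mathscr{Q}) = \{Q\in\spec(R_2)\mid Q\subseteq\mathscr{Q}\}$ by the same result; so the claim is the set identity
\[
\xcal(f^a)\bigl(\{Q\in\spec(R_2)\mid Q\subseteq\mathscr{Q}\}\bigr) = \{P\in\spec(R_1)\mid P\subseteq f^{-1}(\mathscr{Q})\},
\]
to be obtained by unwinding the left-hand side through the description of the spectral map $\xcal(f^a)$ given in \cite[Proposition 4.1]{FiFoSp-X(X)}. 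The preparatory observations are: $\mathscr{Q} = \bigcup\{Q\mid Q\in j_2(\mathscr{Q})\}$ (Lemma \ref{semigroupprime} together with Proposition \ref{embedd-II}(1)), hence $f^{-1}(\mathscr{Q}) = \bigcup\{f^{-1}(Q)\mid Q\in j_2(\mathscr{Q})\}$ since preimages commute with unions; moreover $R_1\setminus f^{-1}(\mathscr{Q}) = f^{-1}(R_2\setminus\mathscr{Q})$ is a saturated multiplicatively closed subset of $R_1$, so $\{P\mid P\subseteq f^{-1}(\mathscr{Q})\} = j_1(f^{-1}(\mathscr{Q}))$ is an element of $\xcal(R_1)$, i.e. closed in the inverse topology of $\spec(R_1)$. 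The inclusion $\supseteq$ is then immediate: each $f^a(Q) = f^{-1}(Q)$ with $Q\subseteq\mathscr{Q}$ lies in $f^{-1}(\mathscr{Q})$, so the set-theoretic image $f^a(j_2(\mathscr{Q}))$ is contained in the inverse-closed set $j_1(f^{-1}(\mathscr{Q}))$, and hence so is its $\xcal(f^a)$-value.

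The content, and the step I expect to be the main obstacle, is the reverse inclusion. One cannot simply assert that $j_1(f^{-1}(\mathscr{Q}))$ equals the inverse-topology closure of the set-theoretic image $f^a(j_2(\mathscr{Q}))$: a spectral map need not carry inverse-closed sets to inverse-closed sets, and in general $\xcal(f^a)$ properly enlarges that closure. So the argument must genuinely use the definition of $\xcal(f^a)$ from \cite[Proposition 4.1]{FiFoSp-X(X)}, the point being that the union-of-primes identity $f^{-1}(\mathscr{Q}) = \bigcup_{Q\in j_2(\mathscr{Q})} f^{-1}(Q)$ is exactly what that definition records — equivalently, that every prime $P$ of $R_1$ disjoint from the saturated multiplicative set $f^{-1}(R_2\setminus\mathscr{Q})$ is forced into $\xcal(f^a)(j_2(\mathscr{Q}))$. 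As a consistency check, the outer rectangle of \eqref{eq:map-scalxcal} commutes — this is the naturality of $\varphi$, namely $\xcal(f^a)\circ\varphi_2 = \varphi_1\circ f^a$ from \cite[Proposition 4.1]{FiFoSp-X(X)} combined with the identity $\varphi_k = j_k\circ i_k$ of Proposition \ref{embedd-II}(2) — so that, by the left square, the two composites of the right square already agree on $i_2(\spec(R_2))$; but since $\spec(R_2)$ need not be dense in the constructible topology of $\scal(R_2)$ and $\xcal(R_1)$ is not $T_1$, this does not by itself close the argument, and the pointwise identity above must still be established.
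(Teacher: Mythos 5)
Your handling of the left square and of the containment $\xcal(f^a)\circ j_2(\mathscr{Q})\subseteq j_1\circ\scal(f)(\mathscr{Q})$ coincides with the paper's argument, but your write-up stops exactly at the point you yourself call the main obstacle: you never prove that every prime $P\subseteq f^{-1}(\mathscr{Q})$ belongs to $\xcal(f^a)(j_2(\mathscr{Q}))$. Saying that the union-of-primes identity $f^{-1}(\mathscr{Q})=\bigcup\{f^{-1}(Q)\mid Q\subseteq\mathscr{Q}\}$ ``is exactly what the definition of $\xcal(f^a)$ records'' is not an argument: by \cite[Proposition 4.1]{FiFoSp-X(X)}, $\xcal(f^a)(j_2(\mathscr{Q}))$ is the inverse-closure (equivalently, since the image of a spectral map is proconstructible, the closure under generizations) of the set $\{f^{-1}(Q)\mid Q\in\spec(R_2),\ Q\subseteq\mathscr{Q}\}$, so membership of $P$ amounts to producing a single prime $Q$ of $R_2$ with $Q\subseteq\mathscr{Q}$ and $f(P)\subseteq Q$; nothing in the definition supplies such a $Q$. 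This is a genuine gap. The paper fills it algebraically: from $f(P)\subseteq\mathscr{Q}$ it asserts $f(P)R_2\cap\Sigma_{\mathscr{Q}}=\emptyset$, so that $f(P)R_2$ extends to a proper ideal of $\Sigma_{\mathscr{Q}}^{-1}R_2$ and is therefore contained in a prime $Q$ with $Q\cap\Sigma_{\mathscr{Q}}=\emptyset$, i.e. $Q\subseteq\mathscr{Q}$, whence $P\subseteq f^{-1}(Q)$.

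Your caution about this step is, however, better founded than your proposal suggests: since $\mathscr{Q}$ is not closed under addition, $f(P)\subseteq\mathscr{Q}$ does not in general force $f(P)R_2\cap\Sigma_{\mathscr{Q}}=\emptyset$, and the inclusion you left unproved can actually fail. Take $R_1=k[x,y]$, $R_2=k[x,y,u,v]/(ux+vy-1)$ with $f$ the canonical map, and let $\mathscr{Q}$ be the set of nonunits of $R_2$ (a semigroup prime). The fiber of $\spec(R_2)\to\spec(R_1)$ over $\mathfrak{q}$ is $\kappa(\mathfrak{q})[u,v]/(\bar{x}u+\bar{y}v-1)$, so the image of $f^a$ is $\spec(R_1)\setminus\{(x,y)\}$, a quasi-compact open set, hence inverse-closed; since $j_2(\mathscr{Q})=\spec(R_2)$, we get $\xcal(f^a)(j_2(\mathscr{Q}))=\spec(R_1)\setminus\{(x,y)\}$. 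On the other hand every element $a\in(x,y)$ maps to a nonunit of $R_2$ (for $a\neq0$ a minimal prime over $aR_1$ has height one, hence lies in the image of $f^a$), so $(x,y)\subseteq f^{-1}(\mathscr{Q})$ and $(x,y)\in j_1(\scal(f)(\mathscr{Q}))$, even though $f((x,y))R_2\ni ux+vy=1$. So the two sides of the right square differ at $(x,y)$: the step you omitted is not a routine verification to be delegated to the cited definition, it is precisely where the argument (and, as far as I can see, the stated identity itself, with $\xcal(f^a)$ as defined) breaks down, so the gap in your proposal cannot be closed as written without additional hypotheses on $f$ guaranteeing $f(P)R_2\cap\Sigma_{\mathscr{Q}}=\emptyset$ for all primes $P\subseteq f^{-1}(\mathscr{Q})$.
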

\begin{proof}
The left square of \eqref{eq:map-scalxcal} commutes by Proposition \ref{prop:scal-functor}(2).

Let now $\mathscr{Q}\in\scal(R_2)$. Then, using Proposition \ref{embedd-II}(1),
\begin{equation*}
j_1\circ\scal(f)(\mathscr{Q})=j_1(f^{-1}(\mathscr{Q}))=\{P \mid P\subseteq f^{-1}(\mathscr{Q})\},
\end{equation*}
while
\begin{equation*}
\begin{array}{rcl}
\xcal(f^a)\circ j_2(\mathscr{Q}) & = & \xcal(f^a)\left(\{P \mid P\subseteq\mathscr{Q}\}\right)\\
& = & \left(f^a\left(\{P \mid P\subseteq\mathscr{Q}\}\right)\right)^{\texttt{gen}}\\
& = & \left(\{f^{-1}(P) \mid P\subseteq\mathscr{Q}\}\right)^{\texttt{gen}}.
\end{array}
\end{equation*}

Let $Q\in\spec(R_1)$. If $Q\in\xcal(f^a)\circ j_2(\mathscr{Q})$, then 
$Q\subseteq f^{-1}(P)$ for some $P\subseteq\mathscr{Q}$; hence, $Q\subseteq f^{-1}(\mathscr{Q})$ and $Q\in j_1\circ\scal(f)(\mathscr{Q})$.

Conversely, suppose $Q\in j_1\circ\scal(f)(\mathscr{Q})$,   then $Q\subseteq f^{-1}(\mathscr{Q})$.   Therefore, $f(Q)\subseteq\mathscr{Q}$ and so  $f(Q)R_2\cap \Sigma_{\mathscr{Q}}=\emptyset$, where  $\Sigma_{\mathscr{Q}}:=R_2\setminus\mathscr{Q}$. 
It follows that $f(Q)R_2$ extends to a proper ideal of $\Sigma_{\mathscr{Q}}^{-1}R_2$, and in particular there is a prime ideal $P$ of $R_2$ such that $f(Q)\subseteq P$ and $\Sigma_{\mathscr{Q}}^{-1}P\neq \Sigma_{\mathscr{Q}}^{-1}R_2$.
Therefore,  $P\subseteq\mathscr{Q}$.
 It follows that $Q\subseteq f^{-1}(f(Q))\subseteq f^{-1}({P}) \ (\subseteq f^{-1}(\mathscr{Q}))$, and so $Q\in\xcal(f^a)\circ j_2(\mathscr{Q})$. Therefore, also the right square of \eqref{eq:map-scalxcal} commutes.
\end{proof}

It is obvious that, if $f$ is an isomorphism, $\scal(f)$ is   a homeomorphism. The converse does not hold;   for example, if $R_1\subset R_2$ is a proper integral extension of one-dimensional local domains, then $\scal(f)$ (like $f^a$ and $\xcal(f^a)$) is a homeomorphism, but $f$ is not an isomorphism.  More generally, we have:
 
\begin{cor}
Let $f:R_1\rightarrow R_2$ be a ring homomorphism, and let $f^a:\spec(R_2)\rightarrow \spec(R_1)$ be the associated spectral map. If $f^a$ is a topological embedding (respectively, a  homeomorphism) then so is $\scal(f)$.
\end{cor}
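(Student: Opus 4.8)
The plan is to exploit the commuting diagram \eqref{eq:map-scalxcal} together with the fact, established in Proposition \ref{embedd-II}(1), that $j_k$ is a topological embedding, so that $\scal(f)$ can be analyzed through $\xcal(f^a)$. First I would recall from \cite[Proposition 4.1]{FiFoSp-X(X)} (or prove directly, since it is elementary) that if $f^a$ is a topological embedding, respectively a homeomorphism, then so is $\xcal(f^a)$: indeed $\xcal(f^a)$ is defined by $Y \mapsto (f^a(Y))^{\texttt{gen}}$, and when $f^a$ is injective and a homeomorphism onto its image, the closure-under-generizations operation is compatible with the restriction to that image, making $\xcal(f^a)$ injective; the statement about open sets follows because the subbasic opens $\ucal(\texttt{D}(J))$ pull back correctly under $f^a$.

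Next, I would use Proposition \ref{prop:scalf}: the right square of \eqref{eq:map-scalxcal} gives $j_1 \circ \scal(f) = \xcal(f^a) \circ j_2$. Since $j_1$ and $j_2$ are topological embeddings and (assuming the preceding step) $\xcal(f^a)$ is a topological embedding, the composite $\xcal(f^a) \circ j_2$ is a topological embedding; hence $j_1 \circ \scal(f)$ is a topological embedding. From the injectivity of $j_1 \circ \scal(f)$ one deduces that $\scal(f)$ is injective, and from the fact that $j_1$ is an embedding together with $j_1\circ\scal(f)$ being an embedding, one deduces that $\scal(f)$ is a homeomorphism onto its image: concretely, $\scal(f)$ is continuous by Proposition \ref{prop:scal-functor}(1), and for the inverse direction one checks that an open subset of $\scal(f)(\scal(R_2))$ pushes forward, via the embedding $j_1$, to an open subset of $j_1(\scal(f)(\scal(R_2))) = (\xcal(f^a)\circ j_2)(\scal(R_2))$, which is open in that image since $\xcal(f^a)\circ j_2$ is an embedding, and then pulls back to an open set of $\scal(R_2)$.

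For the homeomorphism case, the same argument applies: if $f^a$ is a homeomorphism, then $\xcal(f^a)$ is a homeomorphism, so $\xcal(f^a)\circ j_2$ has image exactly $j_1(\scal(R_1))$; chasing the diagram shows $\scal(f)$ is a continuous bijection, and the openness of $\scal(f)$ follows as above, so $\scal(f)$ is a homeomorphism. Alternatively — and perhaps more cleanly — one can avoid $\xcal$ entirely and argue directly: $\scal(f)^{-1}(\boldsymbol{U}(x)) = \boldsymbol{U}(f(x))$ by Proposition \ref{prop:scal-functor}(1), so continuity is immediate, and one checks injectivity of $\scal(f)$ and the image of a basic open $\boldsymbol{U}(x)$ using Lemma \ref{semigroupprime}: writing $\mathscr{Q} = \mathscr{P}(Y)$ with $Y \subseteq \spec(R_2)$, one has $f^{-1}(\mathscr{Q}) = \mathscr{P}(f^a(Y))$, so $\scal(f)$ corresponds under the identification of semigroup primes with saturated multiplicative complements to the operation induced by $f^a$ on closed-under-generization subsets of the spectra.

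I expect the main obstacle to be the bookkeeping needed to pass from "$j_1 \circ \scal(f)$ is a topological embedding, $j_1$ is a topological embedding" to "$\scal(f)$ is a topological embedding" — this is a general point-set fact (if $g\circ h$ is an embedding and $g$ is an embedding, then $h$ is an embedding, using that $g$ reflects the subspace topology), but it requires care about which subspace topologies are being compared, and in the homeomorphism case one must additionally verify surjectivity of $\scal(f)$, which is where the hypothesis that $f^a$ is surjective is genuinely used (via the description $j(\mathscr{Q}) = \{P \in \spec(R) \mid P \subseteq \mathscr{Q}\}$ and the lifting of primes along $f$). Everything else — continuity, the behavior of basic opens — is routine given the results already in the excerpt.
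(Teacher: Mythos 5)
Your argument for the embedding half is exactly the paper's: quote that $\xcal(f^a)$ is a topological embedding whenever $f^a$ is, use the right-hand square $j_1\circ\scal(f)=\xcal(f^a)\circ j_2$ from Proposition \ref{prop:scalf}, and then cancel the embedding $j_1$ by the general point-set fact you state; this part is correct and needs no further comment.

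The homeomorphism half has a genuine gap. Your main claim --- that when $f^a$ (hence $\xcal(f^a)$) is a homeomorphism, the composite $\xcal(f^a)\circ j_2$ has image exactly $j_1(\scal(R_1))$ --- is precisely the surjectivity of $\scal(f)$ that you are trying to prove, and it does not follow from the topological hypotheses by a diagram chase: the image of $j$ inside $\xcal(\cdot)$ is not determined by the topology of the spectrum (Theorem \ref{prop:scal-xcal} characterizes it by an algebraic condition, and Example \ref{ex:dedekind} gives two rings with homeomorphic spectra for which $j$ is surjective in one case and not in the other), so knowing that $\xcal(f^a)$ is a homeomorphism tells you nothing about whether it carries $j_2(\scal(R_2))$ onto $j_1(\scal(R_1))$; one must use the ring map $f$ itself. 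The paper does this by an explicit construction: for $\mathscr{Q}\in\scal(R_1)$ it sets $\mathscr{L}:=\bigcup\{\rad(f(P)R_2)\mid P\subseteq\mathscr{Q}\}$, uses the homeomorphism hypothesis to see that each $\rad(f(P)R_2)$ is prime (so $\mathscr{L}$ is a semigroup prime), and verifies $f^{-1}(\mathscr{L})=\mathscr{Q}$. Your closing remark about ``lifting of primes along $f$'' points in the right direction, and in fact an even simpler argument works, using only the surjectivity of $f^a$: for each prime $P\subseteq\mathscr{Q}$ choose $Q_P\in\spec(R_2)$ with $f^{-1}(Q_P)=P$, put $\mathscr{L}:=\bigcup\{Q_P\mid P\subseteq\mathscr{Q}\}$, and observe that $f^{-1}(\mathscr{L})=\bigcup\{f^{-1}(Q_P)\mid P\subseteq\mathscr{Q}\}=\mathscr{Q}$ since preimages commute with unions; together with the embedding half this yields the homeomorphism. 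As written, however, your proposal neither justifies the image identity nor carries out the lifting, so the surjectivity step --- the only nontrivial content of the second half --- is missing.
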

\begin{proof}
If $f^a$ is a topological embedding then, by    \cite[Proposition 4.4(1)]{FiFoSp-X(X)}, so is $\xcal(f^a)$, and thus also $\xcal(f^a)\circ j_2$ is a topological embedding. 
By Proposition \ref{prop:scalf}, it follows that $j_1\circ\scal(f)$ is a   topological embedding, and thus so is $\scal(f)$.

If $f^a$ is  a homeomorphism, then by the previous paragraph $\scal(f)$ is a topological embedding. Let $\mathscr{Q}\in\scal(R_1)$, and let $\mathscr{L}:=\bigcup\{\rad(f(P)R_2) \mid P\subseteq\mathcal{Q}\}$. Since $f^a$ is  a  homeomorphism, $\rad(f(P)R_2)$ is a prime ideal of $R_2$   (since the irreducible closed $\texttt{V}(P)$ subspace of  $\spec(R_1)$  is homeomorphic to $\texttt{V}(\rad(f(P)R_2))$ in $\spec(R_2)$), and so $\mathscr{L}$ is a prime semigroup.
We claim that $\scal(f)(\mathscr{L})=\mathscr{Q}$. 
Clearly if $q\in\mathscr{Q}$ then $f(q)\in\mathscr{L}$, and $q\in f^{-1}(\mathscr{L})=\scal(f)(\mathscr{L})$. Conversely, if $q\in\scal(f)(\mathscr{L})$, then $f(q)^n\in f(P)R_2$    for some $P\subseteq\mathscr{Q}$  and  for some $n\geq 1$.  Hence $q^n\in f^{-1}(f(P)R_2)=P$, the last equality coming from the bijectivity of $f^a$. Thus, $q\in P\subseteq\mathscr{Q}$. Therefore, $\scal(f)$ is surjective, and thus a homeomorphism.
\end{proof}

\begin{oss}\label{ex:ufd}
Despite the similarity between the properties enjoyed by $\xcal(R)$ and $\scal(R)$, there is however a  significant difference: while $\xcal(R)$ is a purely  topological construction (depending only on the topology of $\spec({R})$, see   \cite[Theorem 3.2 and Corollary 4.9]{FiFoSp-X(X)}), $\scal(R)$ depends also on the algebraic properties of $R$.   In particular, $\scal(R)$, in contrast with $\xcal(R)$ \cite[Theorem 4.5]{FiFoSp-X(X)} cannot be obtained from $\Spec(R)$ alone through a universal property. 
We provide now an   example of this fact, and another example  will be given later (Example \ref{ex:dedekind}).

Unlike in the case of $\xcal(R)$   \cite[Proposition 4.4]{FiFoSp-X(X)},  the image of $\spec(R)$ in $\scal(R)$ cannot be determined uniquely by topological means. 
For example, let $R$ be a unique factorization domain, and let ${\mathcal{P}}(R)$ be the set of   equivalence classes of  prime elements of $R$ 
modulo multiplication by units.   Any prime semigroup in $\scal(R)$ is uniquely determined by the prime elements that it contains,   and thus there is a bijective correspondence between $\scal(R)$ and the power set 
$\boldsymbol{\mathscr {B}} :=\boldsymbol{\mathscr{B}}(\mathcal{P}({R}))$ of ${\mathcal{P}}(R)$, which becomes  a homeomorphism if we take, as a  subbasis for $\boldsymbol{\mathscr{B}}$, the family of the subsets of $\boldsymbol{\mathscr{B}}$ of the form 
$\boldsymbol{V}({p}):=\{B\in \boldsymbol{\mathscr{B}} \mid p\notin B\}$, as $p$ runs in ${\mathcal{P}}(R)$. 
In particular, the topology of $\scal(R)$ depends uniquely on the cardinality of ${\mathcal{P}}(R)$, and thus it does not depend on other properties of $R$ or $\spec({R})$:  for example, it does not depend on the dimension of $R$.  Hence,  by cardinality reasons, there exists a homeomorphism
$\scal(\insZ)\simeq\scal(\insZ[X])$, but $j(\spec(\insZ))$ and $j(\spec(\insZ[X]))$ are not homeo\-morphic, and so  they do not correspond under any homeomorphism between $\scal(\insZ)$ and $\scal(\insZ[X])$.
\end{oss}

We prove next that the spectral space $\scal(R)$ is a retract of the spectral space $\xcal({R})$.

\begin{prop}\label{homeo}
Let $R$ be a ring, $j:  \scal({R})  \rightarrow   \xcal(R) $  the canonical embedding defined in  Proposition \ref{embedd-II}(1) and let  $\mathscr{P}:   \xcal(R)  \rightarrow \scal({R})$ be the map defined by setting $\mathscr{P}(Y) := \bigcup\{P \mid P\in Y\}$ for each $Y \in \xcal({R})$. Then:
\begin{enumerate}
\item[{\rm (1)}] $\mathscr{P}$ is   surjective and    spectral; 
\item[{\rm (2)}] $\mathscr{P}\circ j$ is the identity on $\scal(R)$; 
\item[{\rm (3)}] for every $Y\in\xcal(R)$, $(j\circ\mathscr{P})(Y)=\bigcap\{\mathtt{D}(a)\mid Y\subseteq \mathtt{D}(a)\}$.
\end{enumerate}
\end{prop}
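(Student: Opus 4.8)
The plan is to verify the three assertions in order, using the explicit description $j(\mathscr{Q})=\{P\in\spec(R)\mid P\subseteq\mathscr{Q}\}$ from Proposition \ref{embedd-II}(1) and the characterization $\chiusinv(Y)=\bigcap\{\mathtt{D}(a)\mid Y\subseteq\mathtt{D}(a),\ a\in R\}$ of the inverse closure (recall that the quasi-compact open subsets of $\spec(R)$ are exactly the $\mathtt{D}(J)$ with $J$ finitely generated, and $\mathtt{D}(a_1,\dots,a_n)=\mathtt{D}(a_1)\cup\cdots\cup\mathtt{D}(a_n)$).

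\emph{Step 1: $\mathscr{P}$ is well-defined and part (2).} First I would check that $\mathscr{P}(Y)$ is genuinely a semigroup prime for every $Y\in\xcal(R)$: this is the observation already made right after the definition of semigroup prime, that a union of prime ideals is a semigroup prime, together with the fact that $Y\neq\emptyset$ forces $\mathscr{P}(Y)\neq\emptyset$ and $1\notin\mathscr{P}(Y)$ gives properness. For (2), given $\mathscr{Q}\in\scal(R)$, by Proposition \ref{embedd-II}(1) we have $j(\mathscr{Q})=\{P\mid P\subseteq\mathscr{Q}\}$, so $\mathscr{P}(j(\mathscr{Q}))=\bigcup\{P\mid P\subseteq\mathscr{Q}\}$; by Lemma \ref{semigroupprime} every semigroup prime is a union of primes contained in it, hence this union equals $\mathscr{Q}$. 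Thus $\mathscr{P}\circ j=\mathrm{id}_{\scal(R)}$.

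\emph{Step 2: part (1).} Surjectivity of $\mathscr{P}$ is immediate from (2). For spectrality, I would compute the preimage of a basic quasi-compact open set $\boldsymbol{U}(x)$, $x\in R$. By definition $\mathscr{P}^{-1}(\boldsymbol{U}(x))=\{Y\in\xcal(R)\mid x\notin\mathscr{P}(Y)\}=\{Y\mid x\notin P\text{ for all }P\in Y\}=\{Y\mid Y\subseteq\mathtt{D}(x)\}=\ucal(\mathtt{D}(x))$, which is a basic quasi-compact open subset of $\xcal(R)$ by the construction recalled in Section \ref{X(X)}. Since the $\boldsymbol{U}(x)$ form a basis of quasi-compact opens of $\scal(R)$ (Proposition \ref{embedd}(2)), $\mathscr{P}$ is continuous and pulls quasi-compact opens back to quasi-compact opens, i.e. it is spectral.

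\emph{Step 3: part (3).} Fix $Y\in\xcal(R)$. By (2) applied to $\mathscr{Q}:=\mathscr{P}(Y)$, and by Proposition \ref{embedd-II}(1), $(j\circ\mathscr{P})(Y)=j(\mathscr{P}(Y))=\{P\in\spec(R)\mid P\subseteq\mathscr{P}(Y)\}$. On the other hand $\bigcap\{\mathtt{D}(a)\mid Y\subseteq\mathtt{D}(a)\}=\chiusinv(Y)$, and since $Y\in\xcal(R)$ is inverse-closed this equals $Y$ itself — wait, that would be wrong, so the real content is showing $\{P\mid P\subseteq\mathscr{P}(Y)\}=\bigcap\{\mathtt{D}(a)\mid Y\subseteq\mathtt{D}(a)\}$ directly. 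The inclusion "$\subseteq$": if $P\subseteq\mathscr{P}(Y)$ and $Y\subseteq\mathtt{D}(a)$, then no member of $Y$ contains $a$, so $a\notin\mathscr{P}(Y)$, hence $a\notin P$, i.e. $P\in\mathtt{D}(a)$. The inclusion "$\supseteq$": suppose $P\in\mathtt{D}(a)$ whenever $Y\subseteq\mathtt{D}(a)$; I must show $P\subseteq\mathscr{P}(Y)$, i.e. every $b\in P$ lies in some $Q\in Y$. If not, then $Y\subseteq\mathtt{D}(b)$, whence by hypothesis $P\in\mathtt{D}(b)$, contradicting $b\in P$.

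\emph{Main obstacle.} The only subtle point — and the one I would be most careful about — is part (3): it is tempting to "simplify" $\bigcap\{\mathtt{D}(a)\mid Y\subseteq\mathtt{D}(a)\}$ to $\chiusinv(Y)=Y$, but that collapses the statement incorrectly, because the right-hand side of (3) is a subset of $\spec(R)$ (viewed inside $\xcal(R)$ via $\varphi$) and need not equal $Y$ as a set; the correct reading is that $(j\circ\mathscr{P})(Y)$ is the largest inverse-closed set of the form "all primes below a fixed semigroup prime" that is contained in the intersection, and the clean way to see it is the direct double-inclusion argument above, bypassing any identification with $Y$. Everything else is bookkeeping with the explicit formula for $j$ and the definition of $\mathscr{P}$.
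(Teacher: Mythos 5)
Your proof is correct and follows essentially the same route as the paper: the computation $\mathscr{P}^{-1}(\boldsymbol{U}(x))=\ucal(\mathtt{D}(x))$ for continuity/spectrality, Lemma \ref{semigroupprime} together with the explicit description of $j$ for part (2) and surjectivity, and the same double-inclusion argument for part (3). One caveat: the ``characterization'' $\chiusinv(Y)=\bigcap\{\mathtt{D}(a)\mid Y\subseteq \mathtt{D}(a)\}$ stated in your opening plan is false in general (the inverse closure is the intersection of \emph{all} quasi-compact opens $\mathtt{D}(J)$ with $J$ finitely generated containing $Y$, and the gap between the two intersections is exactly what Theorem \ref{prop:scal-xcal} measures), but you never actually use it --- your Step 3 rightly abandons that identification in favor of the direct argument, so the proof stands.
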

\begin{proof}
(1) and (2). Let $\boldsymbol{U}(x)$ be a   basic  open set of $\scal({R})$, with $x \in R$. Then,
\begin{equation*}
\begin{array}{rcl}
\mathscr{P}^{-1}(\boldsymbol{U}(x)) & = & \{Y\in\xcal({R})\mid \mathscr{P}(Y)\in\boldsymbol{U}(x)\}=\\
& = & \{Y\in\xcal(R) \mid  x\notin\mathscr{P}(Y)\}\\
& = & \{Y\in\xcal(R) \mid  x\notin\bigcup\{P \mid P\in Y\}\}\\
& = & \{Y\in\xcal(R) \mid  x\notin P\text{~for every~}P\in Y\}=\\
& = & \{Y\in\xcal(R) \mid  Y\subseteq \texttt{D}(x)\}=\ucal(\texttt{D}(x))
\end{array}
\end{equation*}
which is a basic quasi-compact open set of $\xcal(R)$. Hence, $\mathscr{P}$ is   (continuous and) spectral.

The fact that $\mathscr{P}\circ j$ is the identity on $\scal(R)$ follows directly from Lemma \ref{semigroupprime} and Proposition \ref{embedd-II}(1), and in particular   it  implies that $\mathscr{P}$ is surjective.

(3) Let $Y\in\xcal(R)$. If $Y\subseteq\mathtt{D}(a)$, then $a\notin P$ for every $P\in Y$, and thus $a\notin\bigcup\{P \mid P\in Y\}=\mathscr{P}(Y)$. Hence,   if $Q\in (j\circ\mathscr{P})(Y)$ then   $a\notin Q$ and  so $Q\in\mathtt{D}(a)$. Conversely, suppose $Q$ belongs to the given intersection. If $Q\notin( j\circ\mathscr{P})(Y)$, then an element  $q\in Q\setminus\mathscr{P}(Y)$ would  exist. But this would imply $Y\subseteq\mathtt{D}(q)$ while $Q\notin\mathtt{D}(q)$, which is absurd.
\end{proof}

\begin{oss} \label{chiusinv}   As we observed at the beginning of the present section, we can define $\mathscr{P}(Y) :=\{P \mid P\in Y\}$ for each nonempty subset $Y$ of $\spec({R})$. In this case, we  can show  that  if  $Y_1,\ Y_2 \subseteq \spec({R})$  and if  $\chiusinv(Y_1)\subseteq \chiusinv(Y_2)$ then $\ms P(Y_1)\subseteq \ms P(Y_2)$. In particular, if $\chiusinv(Y_1) = \chiusinv(Y_2)$, then $\ms P(Y_1)=\ms P(Y_2)$, hence   $\ms P(Y)=\ms P(\chiusinv(Y))$ for each 
nonempty subset $Y$ of $\spec({R})$.

As a matter of fact, let $x\in R$ be such that $x\in \ms P(Y_1)\setminus\ms P(Y_2)$. Then $\texttt{D}(x)$  contains $Y_2$, and it is a closed set, with respect to the inverse topology of $\spec({R})$. Thus, by assumption, $\texttt{D}(x)\supseteq \chiusinv(Y_2) \supseteq \chiusinv(Y_1)\supseteq Y_1$. On the other hand, since  $x\in \ms P(Y_1)$, there   exist a prime ideal  $P \in Y_1$ such that $x\in P$,   and hence $Y_1 \not\subseteq \texttt{D}(x)$, which is a contradiction.
\end{oss}

In the next result, we characterize when the canonical embedding $\scal({R})\hookrightarrow \xcal({R})$ is a homeomorphism and, as a consequence, we deduce that, in general,  there are rings $R$ and inverse-closed subspaces $Y$ of $\spec({R})$ such that $ Y \subsetneq (j\circ\mathscr{P})(Y)$.

\begin{teor}\label{prop:scal-xcal}
Let $R$ be a ring. The following statements are equivalent.
\begin{itemize}
\item[(i)] The canonical embedding $j: \scal(R)\hookrightarrow \xcal(R)$ (defined in Proposition \ref{embedd-II}(1)) is   a homeomorphism.
\item[(ii)] The radical of every finitely generated ideal of $R$ is the radical of a principal ideal.
\item[(iii)] If $I$ is a finitely generated ideal of $R$ and $ I \subseteq \mathscr{Q} := \bigcup \{Q_\lambda \mid \lambda \in \Lambda\} \in \scal( R)$  (where $Q_\lambda \in \spec( R)$ for each $\lambda$), then $I \subseteq Q_\lambda$ for some $\lambda \in \Lambda$.
 \item[(iv)]  A basis for the open sets for the Zariski topology of $\xcal(R)$ is given by the collection $\{\ucal(\texttt{D}(x))\mid x\in R \}.$
\end{itemize}
\end{teor}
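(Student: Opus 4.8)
The plan is to prove the chain of implications (i) $\Rightarrow$ (iv) $\Rightarrow$ (ii) $\Rightarrow$ (iii) $\Rightarrow$ (i), using the explicit descriptions of $j$ and $\mathscr{P}$ from Propositions \ref{embedd-II} and \ref{homeo}. The key translation device is Proposition \ref{homeo}(3): for every $Y\in\xcal(R)$ we have $(j\circ\mathscr{P})(Y)=\bigcap\{\mathtt{D}(a)\mid Y\subseteq\mathtt{D}(a)\}$, and since $\mathscr{P}\circ j=\ude$ by Proposition \ref{homeo}(2), the map $j$ is surjective (hence a homeomorphism, being already an embedding) precisely when $j\circ\mathscr{P}=\ude$ on $\xcal(R)$, i.e. when every inverse-closed $Y$ equals $\bigcap\{\mathtt{D}(a)\mid Y\subseteq\mathtt{D}(a)\}$.

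\textbf{(i) $\Rightarrow$ (iv).} Since $j$ is a homeomorphism and $\{\boldsymbol{U}(x)\mid x\in R\}$ is a basis of open sets of $\scal(R)$ (Proposition \ref{embedd}), the sets $j(\boldsymbol{U}(x))=\ucal(\texttt{D}(x))$ (the equality was computed in the proof of Proposition \ref{embedd-II}(1), and also appears as $\mathscr{P}^{-1}(\boldsymbol U(x))$ in the proof of Proposition \ref{homeo}) form a basis of open sets of $\xcal(R)$.

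\textbf{(iv) $\Rightarrow$ (ii).} Let $I=(a_1,\dots,a_n)$ be a finitely generated ideal; then $\texttt{D}(I)=\bigcup_i\texttt{D}(a_i)$ is a quasi-compact open subset of $\spec(R)$, so $\ucal(\texttt{D}(I))$ is a basic open set of $\xcal(R)$ in the original subbasis. By (iv) it is a union of sets $\ucal(\texttt{D}(x))$; applying this to the point $\texttt{D}(I)\in\ucal(\texttt{D}(I))$ (which lies in $\xcal(R)$ since a quasi-compact open set is inverse-closed) yields an $x\in R$ with $\texttt{D}(I)\subseteq\texttt{D}(x)$ and $\texttt{D}(I)\in\ucal(\texttt{D}(x))$, i.e. $\texttt{D}(x)\subseteq\texttt{D}(I)$ as well. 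Hence $\texttt{V}(I)=\texttt{V}(x)$, which is exactly $\rad(I)=\rad(xR)$.

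\textbf{(ii) $\Rightarrow$ (iii).} Let $I$ be finitely generated with $I\subseteq\mathscr{Q}=\bigcup_\lambda Q_\lambda$. Write $\rad(I)=\rad(xR)$. Since $\mathscr{Q}$ is a semigroup prime and $I\subseteq\mathscr{Q}$, every element of $\rad(I)$ lies in $\mathscr{Q}$: indeed if $y^k\in I\subseteq\mathscr{Q}$ then $y\in\mathscr{Q}$ because $R\setminus\mathscr{Q}$ is multiplicatively closed. In particular $x\in\mathscr{Q}$, so $x\in Q_\lambda$ for some $\lambda$, and then $I\subseteq\rad(I)=\rad(xR)\subseteq Q_\lambda$.

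\textbf{(iii) $\Rightarrow$ (i).} It suffices to show $j$ is surjective, equivalently $j\circ\mathscr{P}=\ude$ on $\xcal(R)$. Fix $Y\in\xcal(R)$; we must show $(j\circ\mathscr{P})(Y)\subseteq Y$ (the reverse inclusion is immediate since $Y\subseteq(j\circ\mathscr P)(Y)$ always, $j$ and $\mathscr P$ being, respectively, an embedding and a retraction — or directly from Proposition \ref{homeo}(3)). Suppose $Q\in(j\circ\mathscr{P})(Y)$, i.e. $Q\subseteq\mathscr{P}(Y)=\bigcup\{P\mid P\in Y\}$, and suppose for contradiction $Q\notin Y$. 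Since $Y$ is inverse-closed, by the characterization $\chiusinv(Y)=\bigcap\{U\mid U\text{ q.c. open},\ U\supseteq Y\}$ there is a quasi-compact open $\texttt{D}(I)$ (with $I$ finitely generated) containing $Y$ but not $Q$. Then $I\subseteq Q\subseteq\mathscr{P}(Y)$, so by (iii) $I\subseteq P$ for some $P\in Y$; but $P\in Y\subseteq\texttt{D}(I)$ forces $I\nsubseteq P$, a contradiction. Hence $Q\in Y$, as required.

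\textbf{Main obstacle.} The step requiring most care is (iii) $\Rightarrow$ (i): one must use the exact form of inverse-closedness of $Y$ to produce a \emph{finitely generated} ideal $I$ with $\texttt{D}(I)\supseteq Y$ and $Q\notin\texttt{D}(I)$, which is where condition (iii) is applied; the rest is bookkeeping with the basic open sets and the already-established identities $\mathscr{P}\circ j=\ude$ and Proposition \ref{homeo}(3). The equivalence with (iv) is essentially a reformulation of surjectivity of $j$ in terms of bases, so the genuine content is the algebraic condition (ii)/(iii).
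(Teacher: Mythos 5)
Your proof is correct, and its substantive steps coincide with the paper's: your (ii) $\Rightarrow$ (iii), (iii) $\Rightarrow$ (i) and (iv) $\Rightarrow$ (ii) are, up to small variations (in (ii) $\Rightarrow$ (iii) you use saturation of $R\setminus\mathscr{Q}$ to get $x\in\mathscr{Q}$, where the paper replaces the element $s$ by a power lying in $I$), exactly the arguments in the paper. The only real difference is how the cycle is closed: the paper proves (i) $\Rightarrow$ (ii) directly, by a contradiction argument that takes $Y=\texttt{D}(I)$ for a finitely generated $I$ whose radical is the radical of no principal ideal, writes $Y=j(\mathscr{Q})$, and produces primes contradicting this equality (and it then handles (ii) $\Leftrightarrow$ (iv) separately); you instead route (i) $\Rightarrow$ (iv) $\Rightarrow$ (ii), observing that a homeomorphism $j$ carries the basis $\{\boldsymbol{U}(x)\mid x\in R\}$ of $\scal(R)$ onto $\{\ucal(\texttt{D}(x))\mid x\in R\}$, via $j(\boldsymbol{U}(x))=j(\scal(R))\cap\ucal(\texttt{D}(x))$ from Proposition \ref{embedd-II}(1). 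Your routing is shorter and makes (iv) an integral link of the chain rather than an appendage; the paper's direct (i) $\Rightarrow$ (ii) has the small advantage of exhibiting explicitly, when (ii) fails, an inverse-closed set outside the image of $j$. Two cosmetic points: in (iv) $\Rightarrow$ (ii) the clause ``$\texttt{D}(I)\in\ucal(\texttt{D}(x))$, i.e.\ $\texttt{D}(x)\subseteq\texttt{D}(I)$'' should read ``$\ucal(\texttt{D}(x))\subseteq\ucal(\texttt{D}(I))$, whence $\texttt{D}(x)\subseteq\texttt{D}(I)$'' (clearly what you intend, using $\texttt{D}(x)\in\ucal(\texttt{D}(x))$); and both you and the paper implicitly assume $\texttt{D}(I)\neq\emptyset$ so that it is a point of $\xcal(R)$ --- in the excluded case $\rad(I)$ is the nilradical, i.e.\ $\rad(0R)$, so (ii) holds trivially there.
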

\begin{proof}
(i) $\Rightarrow$ (ii). By Proposition \ref{embedd-II}(1), $j$ is  a  homeomorphism if and only if it is surjective. 
Suppose   $j$ is  a homeomorphism,  and suppose there is a   nonzero  finitely generated ideal $I$ such that $\rad(I)\neq\rad(aR)$ for every $a\in R$. Consider $Y:=\texttt{D}(I)$: then, $Y$ is open and quasi-compact in the Zariski topology, and thus it is a closed set in the inverse topology. 
Since $j$ is surjective, there is a prime semigroup $\mathscr{Q}$ such that $Y=j(\mathscr{Q})$. Set $\Sigma_{\mathscr{Q}}:=R\setminus\mathscr{Q}$ and $R_{\mathscr{Q}} :=\Sigma_{\mathscr{Q}}^{-1}R$. 
 Suppose $I\subseteq\mathscr{Q}$: then, $IR_{\mathscr{Q}} \neq R_{\mathscr{Q}}$, so that there is a prime ideal $P$ such that $I\subseteq P$ and $PR_{\mathscr{Q}} \neq R_{\mathscr{Q}} $. 
 Therefore, $P\notin \texttt{D}(I)=Y$.    On the other hand,  $P\subseteq\mathscr{Q}$, and thus $P\in j(\mathscr{Q})=Y$: a contradiction. 
 Henceforth, $I\not \subseteq\mathscr{Q}$, i.e., there exists an element  $s\in I\cap \Sigma_{\mathscr{Q}}$. However, since the radical of the ideal $sR$ cannot be equal to $\rad(I)$, and $sR\subseteq I$, there is a prime ideal $Q$ containing $sR$ but not $I$.
 Hence, $Q\in Y$, while $QR_{\mathscr{Q}}=R_{\mathscr{Q}}$, and thus $Q\notin j(\mathscr{Q})$. Again, this conflicts with the assumptions, and so we conclude that $Y$ is not in the image of $j$.

(ii) $\Rightarrow$ (iii). Let  $I$ be a   nonzero finitely generated ideal of $R$ and assume that $ I \subseteq \mathscr{Q}$.
By hypothesis, $\rad(I)=\rad(sR)$ for some $s$, and we can suppose $s\in I$. Since $I\subseteq \bigcup \{Q_\lambda \mid \lambda \in \Lambda\}$, then $s\in Q_\lambda$ for some $\lambda\in\Lambda$ and, hence, $I\subseteq\rad(I)=\rad(sR)\subseteq Q_\lambda$.

(iii) $\Rightarrow$ (i). Let $Y\in\xcal(R)$, and let $\mathscr{P}(Y) = \bigcup \{Q_\lambda \mid \lambda \in \Lambda\} \in \scal( R)$. We claim that $j(\mathscr{P}(Y))=Y$.  Clearly, $Y \subseteq j(\mathscr{P}(Y))$ (Proposition \ref{homeo}(3)). On the other hand, suppose $P\in j(\mathscr{P}(Y))\setminus Y$. Then, since $Y=\chiusinv(Y)$, there is a basic closed set $\Omega=\texttt{D}(I)$ of the inverse topology on $\spec({R})$,
such that $Y\subseteq\Omega$ but $P\notin\Omega$. 
Since $\Omega$ is quasi-compact in the Zariski topology, we can suppose $I$ finitely generated. 
The fact that $P\notin \texttt{D}(I)$ implies $I\subseteq P$. 
On the other hand,  $P\in j(\mathscr{P}(Y))$, hence  $P\subseteq\bigcup\{Q\mid Q\in Y\}$.
Therefore, by hypothesis, there is a $\overline{Q}\in Y \ (\subseteq \Omega)$ such that $I\subseteq\overline{Q}$; but this would imply $\overline{Q}\notin \texttt{D}(I)$, which is absurd. Hence, $Y$ is in the image of $j$, and so $j$ is surjective.

 Clearly, (ii)$\Rightarrow$(iv) since a basis for the open sets of $\xcal(R)$  is given by $ \ucal(\texttt{D}(J))$ for $J$ varying among the finitely generated ideals of $R$.
Conversely, let $J$ be a   nonzero  finitely generated ideal of $R$. Since $\texttt{D}(J)\in \ucal(\texttt{D}(J))$, by assumption there is an element $x\in R$ such that $\texttt{D}(J)\in \ucal(\texttt{D}(x))\subseteq \ucal(\texttt{D}(J))$, that is, $\texttt{D}(x)=\texttt{D}(J)$ and, in other words, $\rad(xR)=\rad(J)$. 
\end{proof}

 An example where the previous theorem can be applied is when $R$ contains an uncountable field but its spectrum is only countable \cite[Proposition 2.5]{sharp-vamos}.

In case $\spec({R})$ is a Noetherian space, we have the following.

\begin{cor}\label{j-homeo}
Let $R$ be a ring. The following statements are equivalent.
 \begin{itemize}
 \item[(i)]
 The canonical embedding $j: \scal(R)\rightarrow \xcal(R)$  is {\bc a} homeomorphism and  $\spec( R)$ is a Noetherian space.
 \item[(ii)] Every prime ideal of $R$ is the radical of a principal ideal.
  \item[(iii)]  If $I$ is an ideal of $R$ and $ I \subseteq \mathscr{Q} := \bigcup \{Q_\lambda \mid \lambda \in \Lambda\} \in \scal( R)$  (where $Q_\lambda \in \spec( R)$ for each $\lambda$), then $I \subseteq Q_\lambda$ for some $\lambda \in \Lambda$.
   
   \item[(iv)]  If $P$ is a prime ideal of $R$ and $P \subseteq \mathscr{Q} := \bigcup \{Q_\lambda \mid \lambda \in \Lambda\} \in \scal( R)$  (where $Q_\lambda \in \spec( R)$ for each $\lambda$), then $P \subseteq Q_\lambda$ for some $\lambda \in \Lambda$.
   \end{itemize}
\end{cor}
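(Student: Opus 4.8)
The plan is to establish the chain of implications (ii) $\Rightarrow$ (iii) $\Rightarrow$ (iv) $\Rightarrow$ (ii) among the purely algebraic conditions, and then to connect this cycle with statement (i) using Theorem \ref{prop:scal-xcal}. The key observation linking everything is that, when $\spec(R)$ is Noetherian, every ideal has the same radical as a finitely generated subideal, so the distinction between ``ideal'' and ``finitely generated ideal'' disappears in conditions of the type appearing in Theorem \ref{prop:scal-xcal}(iii). Thus the heart of the corollary is to show that, \emph{in the presence of Noetherianity}, the hypothesis of Theorem \ref{prop:scal-xcal} reduces from ``radical of every f.g.\ ideal is radical of a principal ideal'' to the superficially weaker ``every prime is the radical of a principal ideal'', and dually that Noetherianity of $\spec(R)$ can itself be \emph{derived} from condition (ii).

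First I would prove (ii) $\Rightarrow$ the hypothesis of Theorem \ref{prop:scal-xcal} \emph{together with} Noetherianity of $\spec(R)$. For Noetherianity: if every prime is the radical of a principal ideal, then in particular every prime is the radical of a finitely generated ideal, and a standard argument (a prime minimal over a non-finitely-generated-radical ideal yields a contradiction, or more directly: the ascending chain condition on radical ideals follows because radical ideals are finite intersections of primes once each prime is ``principal up to radical'') shows $\spec(R)$ is Noetherian. Once $\spec(R)$ is Noetherian, given any finitely generated (indeed any) ideal $I$, we have $\rad(I)=P_1\cap\cdots\cap P_n$ for finitely many primes $P_i$, and $\rad(I)=\rad(a_1R)\cap\cdots\cap\rad(a_nR)$; but an intersection of radicals of principal ideals need not be the radical of a principal ideal in general — so here I would instead argue directly: pick generators, and use that $\texttt{D}(I)=\texttt{D}(a_1)\cup\cdots\cup\texttt{D}(a_n)$ is quasi-compact and apply the hypothesis prime-by-prime. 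The cleanest route is: (ii) $\Rightarrow$ Theorem \ref{prop:scal-xcal}(iii), which reads exactly like the present (iv) but for \emph{finitely generated} $I$; since (ii) is literally the case $I=P$ prime of a stronger-looking statement, I would verify (ii) $\Rightarrow$ (iii) of the corollary and then note (iii) trivially implies Theorem \ref{prop:scal-xcal}(iii), giving via that theorem the homeomorphism $j$, and having separately shown $\spec(R)$ Noetherian we get (i).

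For the cycle among (ii), (iii), (iv): (iii) $\Rightarrow$ (iv) is immediate (specialize $I$ to a prime). For (iv) $\Rightarrow$ (ii): given a prime $P$, apply (iv) with $\mathscr{Q}:=\mathscr{P}(\chiusinv(\{P\}^{\mbox{\tiny\texttt{sp}}}))$ or more simply with a carefully chosen $\mathscr{Q}$ built as a union of primes, so as to force $P$ to lie inside one prime $Q_\lambda$ that is forced to be the radical of a principal ideal; alternatively, and I think more cleanly, derive (ii) from (i) — assuming (i), $\spec(R)$ is Noetherian and $j$ is a homeomorphism, so Theorem \ref{prop:scal-xcal}(ii) holds, i.e.\ every finitely generated ideal has radical equal to the radical of a principal ideal, and since every prime in a Noetherian space is the radical of a finitely generated ideal (pick finitely many generators from a generating set of $P$ whose radical is already $P$ — possible as $P$ is minimal over such a subideal by Noetherianity), we get (ii). Then (ii) $\Rightarrow$ (iii) as in the theorem's proof, using that $\rad(I)=\rad(sR)$ with $s\in I$ forces $s\in Q_\lambda$ for some $\lambda$ hence $I\subseteq\rad(sR)\subseteq Q_\lambda$, and (iii) $\Rightarrow$ (i) combines (iii) $\Rightarrow$ Theorem \ref{prop:scal-xcal}(iii) $\Rightarrow$ $j$ homeomorphism with the derivation of Noetherianity from (iii) (since (iii) for all ideals forces every prime, via a suitable $\mathscr{Q}$, to behave like the radical of a principal ideal, routing back to (ii)).

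I expect the main obstacle to be the two implications that \emph{produce} Noetherianity of $\spec(R)$ from the algebraic conditions (ii) or (iii) — these are the steps where one must genuinely exploit that ``prime $=$ radical of principal'' is a strong finiteness constraint, and the cleanest argument is probably: if every prime ideal is the radical of a principal ideal, then every radical ideal, being an intersection of primes minimal over it, is in fact an intersection of \emph{finitely many} primes (else one builds an infinite strictly descending chain of closed sets $\texttt{V}(a_1)\supsetneq \texttt{V}(a_1)\cap\texttt{V}(a_2)\supsetneq\cdots$ contradicting that $\texttt{V}(a_1)$, being $\texttt{V}$ of a principal ideal inside a space where descending chains of such sets stabilize... ) — I would in the final write-up replace this hand-waving with the observation that condition (ii) makes $\texttt{V}(\cdot)$ of principal ideals cofinal among closed sets and then invoke that a spectral space in which the closed sets $\texttt{V}(a)$ satisfy DCC is Noetherian, or simply cite that a ring in which every prime is the radical of a finitely generated ideal has Noetherian spectrum. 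The remaining implications are the routine bookkeeping of the theorem's proof transported to the Noetherian setting, and carry no real difficulty.
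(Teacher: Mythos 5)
Your plan differs from the paper's proof, which is a two-line citation argument: the paper gets (i) $\Leftrightarrow$ (ii) from Theorem \ref{prop:scal-xcal} plus the Ohm--Pendleton fact (\cite{ohpe}) that $\spec(R)$ is Noetherian iff every radical ideal is the radical of a finitely generated ideal, and gets (ii) $\Leftrightarrow$ (iii) $\Leftrightarrow$ (iv) by citing W.\,W.\ Smith \cite{sm}. Reproving Smith's covering theorem from scratch, as you propose, is legitimate --- but exactly those covering implications are where your proposal has genuine gaps. For (iv) $\Rightarrow$ (ii), the specific semigroup prime you suggest, $\mathscr{Q}=\mathscr{P}\bigl(\chiusinv(\{P\}^{\mbox{\tiny\texttt{sp}}})\bigr)$, is just the union of all primes containing $P$, so (iv) applied to it yields only the tautology that $P$ lies in some prime containing $P$; and ``a carefully chosen $\mathscr{Q}$ built as a union of primes'' restates the goal without giving the construction. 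The argument that works is: if $P\neq\rad(xR)$ for every $x\in P$, then for each $x\in P$ there is a prime $Q_x\ni x$ with $P\not\subseteq Q_x$, and $\mathscr{Q}:=\bigcup_{x\in P}Q_x$ contradicts (iv). Your fallback ``derive (ii) from (i) instead'' does not repair this: if (iv) $\Rightarrow$ (ii) is dropped, nothing is ever deduced from (iv) and the four statements are not shown equivalent; moreover your derivation of Noetherianity inside (iii) $\Rightarrow$ (i) also routes through this missing step.

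The step (ii) $\Rightarrow$ (iii) has a second gap. You want $\rad(I)=\rad(sR)$ with $s\in I$ for an \emph{arbitrary} ideal $I$, but (ii) speaks only of primes, and you explicitly discarded the observation that would justify it --- wrongly: for finitely many elements one has $\rad(a_1R)\cap\dots\cap\rad(a_nR)=\rad(a_1\cdots a_nR)$, because $\texttt{V}(a_1\cdots a_n)=\texttt{V}(a_1)\cup\dots\cup\texttt{V}(a_n)$; so once $\spec(R)$ is Noetherian and each minimal prime of $I$ is $\rad(a_iR)$, the product of the $a_i$ (raised to a suitable power to land in $I$) does the job. Alternatively, and without any Noetherian hypothesis, note that $R\setminus\mathscr{Q}$ is a saturated multiplicative set disjoint from $I$, pick a prime $P$ with $I\subseteq P\subseteq\mathscr{Q}$, write $P=\rad(aR)$ and observe $a\in Q_\lambda$ for some $\lambda$, whence $I\subseteq P\subseteq Q_\lambda$. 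Neither of these appears in your write-up; the vague substitute (``apply the hypothesis prime-by-prime'' on $\texttt{D}(I)$) is not a proof. Your remaining ingredients are fine: (i) $\Rightarrow$ (ii) as you sketch it, and the fact that ``every prime is the radical of a finitely generated ideal'' forces $\spec(R)$ Noetherian can indeed simply be cited (it is also in \cite{ohpe}), so with the two repairs above your self-contained route would go through.
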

\begin{proof}
The equivalence of (i) and (ii) follows from the previous theorem, since $\spec(R)$ is Noetherian if and only if every radical ideal is the radical of a finitely generated ideal  (see for instance \cite{ohpe} or \cite[Theorem 3.1.11]{FoHuPa}). The equivalences (ii) $\Leftrightarrow$ (iii) $\Leftrightarrow$ (iv) are due to W.W. Smith \cite{sm}.
\end{proof}

\begin{oss}
Rings verifying property (iii) of the previous corollary has been called \emph{compactly packed} in \cite{revi}.
\end{oss}

\begin{oss}
It is well known that the rings verifying the equivalent  conditions (ii)-(iv) of the previous corollary have Noetherian spectrum. On the other hand, Theorem \ref{prop:scal-xcal} implies that $j$ is surjective for any B\'ezout domain and there are examples of  B\'ezout domains (or, even, valuation domains) $R$ such that $\spec(R)$ is not Noetherian. Therefore,  for an arbitrary ring $R$, conditions (ii) $\Leftrightarrow$ (iii) $\Leftrightarrow$ (iv) of the previous corollary   do not provide a characterization of when $j: \scal(R)\longrightarrow \xcal(R)$  is a homeomorphism. 
In other words, 
the property that $j: \scal(R)\rightarrow \xcal(R)$ is a homeomorphism does not depend only on the topology of the spectrum of $R$, but also on the algebraic structure of $R$.
\end{oss}

\begin{oss} 
(a) Let $R$ be a ring. If $\boldsymbol{\mathcal{T}}:=\{\mathscr{Q}_\alpha \mid \alpha\in A\}$ is a nonempty subset of $\scal(R)$, then $\bigcup\{\mathscr{Q}_\alpha\mid \alpha\in A\}$ is a prime semigroup of $R$, and it is easily seen that it is the supremum of $\boldsymbol{\mathcal{T}}$ in $\scal(R)$, with the order induced by the hull-kernel topology,  that is the set theoretic inclusion.

For investigating  the existence of the infimum of $\boldsymbol{\mathcal{T}}$,   we cannot argue in a dual way, since the natural candidate $\bigcap\{\mathscr{Q}_\alpha \mid \alpha\in A\}$ is not, in general, a prime semigroup (for example, if $P$ and $Q$ are incomparable prime ideals of $R$, they are both prime semigroups, but $P\cap Q$ is not).  However,  we can show that an infimum can be determined in many cases. More precisely,
let $j: \scal({R}) \hookrightarrow \xcal({R})$    be the topological embedding defined in Proposition \ref{embedd-II}(1). Then, the set $j(\boldsymbol{\mathcal{T}}):=\{j(\mathscr{Q}_\alpha)\mid \alpha\in A\}$ 
is a family of closed subsets in the inverse topology of $\spec(R)$, 
and so {\sl if} $C_{\boldsymbol{\mathcal{T}}}:=\bigcap\{j(\mathscr{Q}_\alpha) \mid \alpha\in A\}$ is {\sl nonempty}  (for instance, since $X^{\mbox{\tiny\texttt{inv}}}$ is compact, for this assumption it suffices that the set of the $j(\mathscr{Q}_\alpha)$ satisfies the finite intersection property), then   it 
still belongs to $\xcal(R)$, and it is the infimum of $j(\boldsymbol{\mathcal{T}})$ in $\xcal({R})$.
 We claim that $C_{\boldsymbol{\mathcal{T}}}=j(\mathscr{Q}_0)$ for some $\mathscr{Q}_0\in\scal({R})$.
 More precisely, we claim that $\mathscr{Q}_0=\bigcup\{Q \mid Q \in C_{\boldsymbol{\mathcal{T}}}\}$. 
 
 Indeed, if $Q\in C_{\boldsymbol{\mathcal{T}}} $ then $Q \in j(\mathscr{Q}_0)$ by Proposition \ref{embedd-II}(1).
 Conversely, if $P\in j(\mathscr{Q}_0)$, then $P\subseteq\mathscr{Q}_0\subseteq\mathscr{Q}_\alpha$ for every $\alpha\in A$, and thus (again, by Proposition \ref{embedd-II}(1)) $P\in j(\mathscr{Q}_\alpha)$ for every $\alpha$, i.e., $P\in C_{\boldsymbol{\mathcal{T}}}$.
 Therefore, $j(\mathscr{Q}_0)$  is the infimum of $j(\boldsymbol{\mathcal{T}})$ in $j(\scal(R))$, and since $j$ is a homeomorphism between $\scal(R)$ and its image in $\xcal({R})$,  it follows that $\mathscr{Q}_0$ is the infimum of $\boldsymbol{\mathcal{T}}$ in $\scal({R})$.

 (b)  From (a), it follow by construction that the topological embedding $j: \scal({R}) \hookrightarrow \xcal({R})$
preserves the infimum, in the cases where it exists.
However, the embedding $j$ in general does not preserve the supremum.

For example, let $D$ be a local unique factorization domain of dimension 2, and let $Y_1,Y_2$ be two nonempty disjoint sets of prime ideals such that $Y_1\cup Y_2$ is the set $\spec^1(D)$ of  prime ideals  of height 1 of $D$. 
If $\mathscr{Q}_i:=\bigcup\{P \mid P\in Y_i\}$, then 
$j(\mathscr{Q}_i) = Y_i \cup \{(0)\}$, and thus $j(\mathscr{Q}_1)\cup j(\mathscr{Q}_2)=\spec^1(D)\cup \{(0)\} \subsetneq \spec(D)$.
 However,
 $\mathscr{Q}_1\cup\mathscr{Q}_2$ is equal to the set of non-units of $D$, so that $j(\mathscr{Q}_1\cup\mathscr{Q}_2)=\spec(D)$. 
 
 On the other hand, if $\{P_1, P_2,\ldots,P_n\}$ is a finite set of prime ideals (and thus, in particular, of prime semigroups) of $R$, then $j(P_1\cup P_2\cup \cdots\cup P_n)=j(P_1)\cup j(P_2)\cup \cdots\cup j(P_n)$. Indeed, by Proposition \ref{embedd-II}(1), $Q\in j(P_1\cup P_2\cup \cdots\cup P_n)$ if and only if $Q\subseteq P_1\cup P_2\cup\cdots\cup P_n$ and, by prime avoidance, this is equivalent to $Q\subseteq P_i$ for some $i$, and thus to $Q\in j(P_i)$ for some $i$.
\end{oss}

\section{The integral domain case} 

Let $D$ be an integral domain, and recall that the set $\overr(D)$ of the overrings of $R$ has a natural topological structure (see Section \ref{sect:overrings}). Then, there is a natural map
\begin{equation*}
\begin{aligned}
\ell_0\colon\Spec(D) & \longrightarrow\overr(D) \\
P & \longmapsto D_P,
\end{aligned}
\end{equation*}
which is a topological embedding \cite[Lemma 2.4]{dobbs-fedder-fontana}. We show   next that $\scal(D)$ admits a similar interpretation   with respect to $\overr(D)$.

\begin{prop}\label{prop:mappe-domains}
Let  $D$ be an integral  domain with quotient field $K$  and let   {\rm${\overr}(D)$} be the set of the overrings of $D$, endowed with the Zariski topology.
 
\begin{enumerate}
 \item[{\rm (1)}]    Let $\mathscr{Q}\in  \scal(D) $ and set as above $\Sigma_{\mathscr{Q}}:=D\setminus\mathscr{Q}$ and $D_{\mathscr{Q}} :=\Sigma_{\mathscr{Q}}^{-1}D$. 
The map

\begin{equation*}
\begin{aligned}
\ell\colon \scal(D)& \longrightarrow\overr(D) \\
\mathscr{Q} & \longmapsto   D_{\mathscr{Q}}
\end{aligned}
\end{equation*}
is a topological embedding that extends the map $\ell_0$ defined above.

\item [{\rm (2)}]The map 

\begin{equation*}
\begin{aligned}
\omega\colon\overr(D) & \longrightarrow\xcal(D) \\
T & \longmapsto \qspec^{\widetilde{\, \wedge_{\{T\}}}}(D)
\end{aligned}
\end{equation*}
is  a continuous map of spectral spaces. 
Moreover,   if $T \in \overr(D)$ and the canonical embedding $\tau:D\longrightarrow T$ is flat, then   $\omega(T) =  \tau^a(\spec(T))$.

\item[{\rm (3)}]  The composition $\omega \circ \ell: \scal(D) \hookrightarrow \xcal(D)$ coincides with the topological embedding $j$ defined in Proposition \ref{embedd-II}(1).
\end{enumerate}
\end{prop}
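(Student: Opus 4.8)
The plan is to prove each of the three statements in turn, building on the earlier results. For part (1), the key observation is that $\ell$ factors through the map $j$ of Proposition~\ref{embedd-II}(1): since $j(\mathscr{Q}) = \{P\in\spec(D)\mid P\subseteq\mathscr{Q}\} = \lambda^a(\spec(D_{\mathscr{Q}}))$ is an inverse-closed subspace of $\spec(D)$, and since a localization is determined by the set of primes it lies over, one expects $D_{\mathscr{Q}}$ to be recoverable from $j(\mathscr{Q})$ via an intersection of localizations, namely $D_{\mathscr{Q}} = \bigcap\{D_P\mid P\in j(\mathscr{Q})\}$. First I would verify this ring-theoretic identity (it follows from the fact that $\Sigma_{\mathscr{Q}} = D\setminus\mathscr{Q}$ is a saturated multiplicative set, so $D_{\mathscr{Q}}$ is the intersection of the $D_P$ over primes $P$ disjoint from $\Sigma_{\mathscr{Q}}$), which gives injectivity of $\ell$ and shows $\ell$ agrees with $\ell_0$ on $\spec(D)$ (where $\mathscr{Q}=P$ forces $j(P) = \{P\}^{\mbox{\tiny\texttt{gen}}}$ and the intersection collapses to $D_P$). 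Continuity and the embedding property I would obtain by pulling back the basic open sets $\texttt{B}(x_1,\dots,x_r)$ of $\overr(D)$: one checks $\ell^{-1}(\texttt{B}(x_1,\dots,x_r))$ is a (basic, up to finite operations) open set of $\scal(D)$ expressible through the $\boldsymbol{U}(\cdot)$, using that $D[x]\subseteq D_{\mathscr{Q}}$ iff $x = a/s$ with $s\in\Sigma_{\mathscr{Q}}$, i.e.\ iff $s\notin\mathscr{Q}$; and that $\ell$ is a homeomorphism onto its image then follows as in the proof of Proposition~\ref{embedd-II}(1).

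For part (2), the map $\omega$ is the composition of two maps already understood: the embedding $\iota: \overr(D)\hookrightarrow\SStarf(D)$, $T\mapsto\wedge_{\{T\}}$ (Section~\ref{sect:overrings}(1)), followed by $\widetilde{\Phi}: \SStarf(D)\to\inssemistabft(D)$ (the topological retraction $\star\mapsto\stt$), followed by the homeomorphism $\Delta: \inssemistabft(D)\to\xcal(D)$, $\star\mapsto\qspec^\star(D)$ from Section on spectral semistar operations. Thus $\omega(T) = \Delta(\widetilde{\Phi}(\iota(T))) = \qspec^{\widetilde{\wedge_{\{T\}}}}(D)$, and continuity of $\omega$ as a map of spectral spaces is immediate from continuity of each factor. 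For the flat case: when $\tau: D\hookrightarrow T$ is flat, $\wedge_{\{T\}}$ is already a spectral semistar operation of finite type (flatness guarantees $E\wedge_{\{T\}} = ET = \bigcap\{ED_P\mid P\in\tau^a(\spec T)\}$, i.e.\ $\wedge_{\{T\}} = \s_{\tau^a(\spec T)}$), so $\widetilde{\wedge_{\{T\}}} = \wedge_{\{T\}}$ and hence $\omega(T) = \qspec^{\wedge_{\{T\}}}(D) = \tau^a(\spec(T))$, using $\Delta\circ\s^\sharp = \mathrm{id}$ together with the identification $\qspec^{\s_Y}(D) = \chiusinv(Y)$ (and $\tau^a(\spec T)$ is already inverse-closed by flatness, being closed under generizations and quasi-compact).

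For part (3) — the statement actually asked for — I would argue by chasing a semigroup prime $\mathscr{Q}\in\scal(D)$ through both sides. We have $(\omega\circ\ell)(\mathscr{Q}) = \omega(D_{\mathscr{Q}}) = \qspec^{\widetilde{\wedge_{\{D_{\mathscr{Q}}\}}}}(D)$. Now $\lambda: D\to D_{\mathscr{Q}} = \Sigma_{\mathscr{Q}}^{-1}D$ is a localization, hence flat, so by part (2) this equals $\lambda^a(\spec(D_{\mathscr{Q}})) = \{P\in\spec(D)\mid P\cap\Sigma_{\mathscr{Q}}=\emptyset\} = \{P\mid P\subseteq\mathscr{Q}\}$, which is exactly $j(\mathscr{Q})$ by the last statement of Proposition~\ref{embedd-II}(1). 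So $\omega\circ\ell = j$ as maps; since both are topological embeddings into $\xcal(D)$ with the same image, the identification is complete. The main obstacle I anticipate is part (2) in the non-flat generality: one must be careful that $\omega$ is genuinely a well-defined map of spectral spaces (not merely set-theoretic), which forces one to go through $\widetilde{\Phi}$ and $\Delta$ rather than trying to describe $\qspec^{\widetilde{\wedge_{\{T\}}}}(D)$ by hand — but for part (3) only the flat (localization) case is needed, so that difficulty does not actually impinge on the final claim.
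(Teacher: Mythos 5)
Your plan is correct and, for part (1), the first half of part (2), and part (3), it is essentially the paper's own argument: continuity of $\ell$ by pulling back the subbasic sets $\texttt{B}(x)$ (writing $x=d/s$ with $s\notin\mathscr{Q}$, so that $\boldsymbol{U}(s)\subseteq\ell^{-1}(\texttt{B}(x))$), the embedding property via $\ell(\boldsymbol{U}(f))=\ell(\scal(D))\cap\texttt{B}(f^{-1})$, continuity of $\omega$ from the factorization $\omega=\Delta\circ\widetilde{\Phi}\circ\iota$, and (3) read off from (2) applied to the flat localization $\lambda\colon D\to D_{\mathscr{Q}}$ together with $j(\mathscr{Q})=\lambda^a(\spec(D_{\mathscr{Q}}))$. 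Where you genuinely diverge is the flat case of (2): the paper works with quasi-$\star$-primes directly, proving $\tau^a(\spec(T))\subseteq\qspec^{\wedge_{\{T\}}}(D)$ always (via $P=PT\cap D$) and the reverse inclusion under flatness by contracting a prime of $T$ minimal over $PT$, then passing to $\widetilde{\wedge_{\{T\}}}$ through $\qspec^{\widetilde{\star}}(D)=\chiusinv(\qspec^{\star}(D))$; you instead show that flatness makes $\wedge_{\{T\}}$ spectral, namely $\wedge_{\{T\}}=\s_{\tau^a(\spec(T))}$ (this rests on the Richman-type identity $ET_Q=ED_{Q\cap D}$, a classical but heavier input than the paper's minimal-prime argument), so that $\widetilde{\wedge_{\{T\}}}=\wedge_{\{T\}}$ and the quasi-spectrum is read off through $\Delta$. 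Both routes ultimately need that $\tau^a(\spec(T))$ is inverse-closed; you make this explicit (quasi-compact and closed under generizations), which the paper leaves implicit. Two small repairs: the identity $D_{\mathscr{Q}}=\bigcap\{D_P\mid P\in j(\mathscr{Q})\}$ determines $D_{\mathscr{Q}}$ from $j(\mathscr{Q})$, which is the wrong direction for injectivity of $\ell$; injectivity follows instead from saturation of $\Sigma_{\mathscr{Q}}$ (namely $s\in\Sigma_{\mathscr{Q}}$ iff $1/s\in D_{\mathscr{Q}}$), or from the displayed embedding equality, since the sets $\boldsymbol{U}(f)$ separate points of $\scal(D)$. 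Likewise, ``as in Proposition \ref{embedd-II}(1)'' should be replaced by the one-line computation specific to $\ell$: if $D_{\mathscr{Q}}\in\texttt{B}(f^{-1})$, write $1/f=d/s$ with $s\notin\mathscr{Q}$, so $s=df$ and hence $f\notin\mathscr{Q}$. Neither point affects the soundness of the proposal.
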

\begin{proof}
(1) Since $\{\texttt{B}(x)\mid  x\in K\}$ is a subbasis of open sets for $ \overr(D)$, to get continuity of $\ell$ it suffices to prove that, if $x\in K$, then $\ell^{-1}(\texttt{B}(x))$ is open in $\scal(D)$. 
Take a semigroup prime $\ms Q\in \ell^{-1}(\texttt{B}(x))$, and let 
$d,s\in D$ with $s\notin \ms Q$ such that $x=\frac{d}{s}\in D_{\ms Q}$. Then, we have $\ms Q\in \boldsymbol{U}(s)\subseteq \ell^{-1}(\texttt{B}(s^{-1}))\subseteq \ell^{-1}(\texttt{B}(x))$, that is, $\ell^{-1}(\texttt{B}(x))$ is open in $\scal(D)$.

To prove that $\ell$ is a topological embedding  it is now sufficient to note that, for any nonzero element $f \in D$, we have $\ell(\boldsymbol{U}(f))=\ell(\scal(D))\cap \texttt{B}(f^{-1})$. 
The inclusion $\subseteq $ is trivial. Conversely, if $T\in \ell(\scal(D))\cap \texttt{B}(f^{-1})$,
 then there are a semigroup prime $\ms Q$ and elements $d,s\in D$ such that $s\notin \ms Q$ and $\frac{1}{f}=\frac{d}{s}\in D_{\ms Q}=T$.
  It follows $s=df\notin \ms Q$ and, a fortiori, by definition of semigroup prime, $f\notin \ms Q$. Then $T\in \ell(\boldsymbol{U}(f))$, and thus the proof is complete.

(2)  It is sufficient to note that $\omega$ is the composition of three continuous maps,  namely the topological embedding $  \iota: \overr(D) \hookrightarrow \inssemistar(D)$
(defined,  for each overring $T$ of $D$, by   $ \iota(T) :=  \wedge_{\{T\}} $ \cite[Proposition 2.5]{FiSp}), the   continuous surjection  $\widetilde{\Phi}: \inssemistar(D) \twoheadrightarrow
\inssemistabft(D)$ (defined,  for each $ \star \in \inssemistar(D)$, by $  \widetilde{\Phi}(\star) :=  \widetilde{\star}$   \cite[Proposition 4.3(2)]{FiFoSp-JPAA}), and the homeomorphism
$ \Delta: \inssemistabft(D)   \xrightarrow{\sim}  \xcal(D)$ (defined,  for each  $ \star \in \inssemistabft(D)$, by $ \Delta(\star) :=  \qspec^\star(D)$  \cite[Proposition 5.2(1)]{FiFoSp-X(X)}).

Suppose $T$ is flat over $D$. In order to show that $\qspec^{\widetilde{\, \wedge_{\{T\}}}}(D) = \{ Q\cap D \mid Q \in \spec(T) \}$  we observe  that,  even if $T$ is not $D$-flat, the equality  $ \qspec^{\widetilde{\, \wedge_{\{T\}}}}(D)=\chiusinv(\qspec^{\wedge_{\{T\}}}(D))$ holds, in view of Remark \ref{qspec-inv}, since $\wedge_{\{T\}}$ is a semistar operation of finite type.
Moreover,  $P \in \qspec^{\wedge_{\{T\}}}(D)$ if and only if $P = PT \cap D$.  Hence, $ \tau^a(\spec(T)) = \{ Q\cap D \mid Q \in \spec(T) \} \subseteq \qspec^{\wedge_{\{T\}}}(D) $.  Conversely,   assuming that $T$ is $D$-flat, if  $P \in \qspec^{\wedge_{\{T\}}}(D)$ and if $Q \in \spec(T)$ and it is minimal over $PT$ then, by flatness, $Q \cap D =P$ \cite[Section 1-6, Exercise 37]{kap} and so $P \in  \tau^a(\spec(T))$. 

(3) is a straightforward consequence of the definitions.
\end{proof}

When we specialize  our investigation to the class of Pr\"ufer domains,   we obtain more precise statements.

\begin{prop}\label{prufer-case}
Let $D$ be a Pr\"ufer domain.  Then, the chain of canonical maps
\begin{equation*}
\overr(D) \xlongrightarrow{\iota} \insfinss(D) \xlongrightarrow{\widetilde{\Phi}} \inssemistabft(D)  \xlongrightarrow{\Delta} {\boldsymbol{\mathcal{X}}}(D)
\end{equation*}
is a chain of homeomorphisms, and $\widetilde{\Phi}$ is the identity. Moreover, the composition $\Delta\circ\widetilde{\Phi}\circ\iota$ coincides with the map $\omega$ defined in Proposition \ref{prop:mappe-domains}(2), and $\omega(T):=\qspec^{\wedge_{\{T\}}}(D)$ for all $T\in\overr(D)$.
\end{prop}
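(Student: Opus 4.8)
The plan is to recall the chain of maps
$$\overr(D) \xlongrightarrow{\iota} \insfinss(D) \xlongrightarrow{\widetilde{\Phi}} \inssemistabft(D) \xlongrightarrow{\Delta} \xcal(D)$$
and argue that over a Pr\"ufer domain each factor is a homeomorphism. The key point is the behaviour of $\widetilde{\Phi}$: by the properties recalled in Section 1.5, $\widetilde{\Phi}(\star)=\stt=\star$ exactly when $\star$ is spectral and of finite type, so it suffices to show that over a Pr\"ufer domain every finite type semistar operation is already stable (equivalently spectral). First I would recall the classical fact (Gilmer--Hoffmann, or Fontana--Huckaba) that a domain $D$ is Pr\"ufer if and only if every semistar operation of finite type on $D$ is stable of finite type, equivalently $\star=\stt$ for all $\star\in\insfinss(D)$; this identifies $\insfinss(D)=\inssemistabft(D)$ as sets, and since $\widetilde{\Phi}$ is a continuous retraction (Section 1.5) that is now the identity, it is a homeomorphism. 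Consequently $\iota$, which is a topological embedding in general (Section 1.7(1)), becomes a surjection onto $\insfinss(D)=\inssemistabft(D)$: indeed $\pi\circ\iota=\ude$ by the retraction $\pi$ of Section 1.7(2), and over a Pr\"ufer domain $\iota$ is essentially inverse to $\pi$ because each finite type spectral operation $\s_Y$ has $D^{\s_Y}=\bigcap_{P\in Y}D_P$ an overring $T$ with $\wedge_{\{T\}}=\s_Y$; thus $\iota$ is a homeomorphism.

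Then $\Delta$ is already a homeomorphism in full generality by \cite[Proposition 5.2(1)]{FiFoSp-X(X)} (it is the inverse of $\boldsymbol{\s^\sharp}$), so the whole chain $\Delta\circ\widetilde{\Phi}\circ\iota$ is a homeomorphism, and by definition it equals the map $\omega$ of Proposition \ref{prop:mappe-domains}(2). For the last assertion, $\omega(T)=\Delta(\widetilde{\Phi}(\iota(T)))=\Delta(\widetilde{\wedge_{\{T\}}})=\qspec^{\widetilde{\wedge_{\{T\}}}}(D)$; but since $\widetilde{\Phi}$ is the identity here, $\widetilde{\wedge_{\{T\}}}=\wedge_{\{T\}}$, whence $\omega(T)=\qspec^{\wedge_{\{T\}}}(D)$.

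The main obstacle is pinning down the precise characterization of Pr\"ufer domains that makes $\widetilde{\Phi}$ the identity on $\insfinss(D)$ and verifying that $\iota$ lands onto all of $\insfinss(D)$ — i.e., that every finite type (hence, over a Pr\"ufer domain, stable) semistar operation is of the form $\wedge_{\{T\}}$ for an overring $T$. This uses that over a Pr\"ufer domain the overrings are exactly the intersections of localizations $\bigcap_{P\in Y}D_P$ with $Y$ quasi-compact, together with $\wedge_{\{T\}}=\s_Y$ when $T=\bigcap_{P\in Y}D_P$; the quasi-compactness of $Y$ is what guarantees finite type (Section 1.6, \cite[Corollary 4.4]{FiSp}). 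Once these standard Pr\"ufer facts are in place, the continuity statements are immediate from the general results of Sections 1.5--1.7 and nothing further needs to be computed.
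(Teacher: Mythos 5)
Your reduction steps are fine ($\Delta$ is a homeomorphism in general, $\widetilde{\Phi}$ is a retraction so it suffices to show it is the identity on $\insfinss(D)$, and the formula $\omega(T)=\qspec^{\wedge_{\{T\}}}(D)$ follows once $\widetilde{\,\wedge_{\{T\}}}=\wedge_{\{T\}}$, which holds because every overring of a Pr\"ufer domain is flat). The gap is that the whole content of the proposition --- why the Pr\"ufer hypothesis forces $\insfinss(D)=\inssemistabft(D)$ and forces $\iota$ to be surjective --- is exactly what you outsource without proof. The ``classical fact'' you invoke is misattributed: the Gilmer--Hoffmann theorem is the polynomial/unit-content characterization of Pr\"ufer domains and has nothing to do with semistar operations, and no statement of the form ``$D$ is Pr\"ufer iff every finite-type semistar operation is stable'' is available in the references this paper relies on; the implication you need is true, but it is essentially the proposition being proved, so quoting it is circular here. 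Similarly, the identity $\wedge_{\{T\}}=\s_Y$ for $T=\bigcap_{P\in Y}D_P$ with $Y$ quasi-compact, which you need in order to see that $\iota$ hits every stable operation of finite type, is asserted but never argued, and it is false for arbitrary domains --- this is precisely the point where invertibility/flatness must enter. As written, then, the proposal establishes only the easy parts.

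For comparison, the paper settles the collapse and the surjectivity of $\iota$ in one stroke, without quoting any Pr\"ufer-specific classification of semistar operations: given $\star\in\insfinss(D)$, the overring $D^\star$ is again a Pr\"ufer domain and is $D$-flat; a Pr\"ufer domain admits a unique star operation of finite type, so the restriction of $\star$ to $\F(D^\star)$ is the identity, whence $F^\star=(FD^\star)^\star=FD^\star$ for every $F\in\f(D)$ and $\star=\wedge_{\{D^\star\}}=\widetilde{\,\wedge_{\{D^\star\}}}$. This simultaneously shows $\iota$ is onto and $\widetilde{\Phi}$ is the identity. If you prefer your route through spectral operations, you can close your gap by an analogous computation: for a nonzero finitely generated (hence invertible) ideal $F$ one has $F^{-1}\bigl(\bigcap_{P\in Y}FD_P\bigr)\subseteq\bigcap_{P\in Y}D_P=T$, so $\bigcap_{P\in Y}FD_P\subseteq FT$, the reverse inclusion being trivial; since $\s_Y$ is of finite type when $Y$ is quasi-compact \cite{FiSp} and $\wedge_{\{T\}}$ is always of finite type, agreement on $\f(D)$ gives $\s_Y=\wedge_{\{T\}}$. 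With that lemma actually proved, your plan yields a correct proof, mildly different from the paper's; without it, the key step is missing.
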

\begin{proof}
 The map $\Delta: \inssemistabft(D)\rightarrow  \boldsymbol{\mathcal{X}}(D)$ (defined by $\Delta(\star) := \qspec^\star(D)$ for each $\star$ spectral semistar operation of finite type on $D$) is a homeomorphism
by   \cite[Proposition 5.2(1)]{FiFoSp-X(X)}.

Since $D$ is a Pr\"ufer domain, each of its  overrings is $D$-flat \cite[Theorem 1.1.1]{FoHuPa}. Then, the canonical map
 ${\widetilde{\Phi}} \circ \iota :  \overr(D) \longrightarrow \inssemistabft(D)$, 
 $T \mapsto \wedge_{\{T\}} = \widetilde{\,\wedge_{\{T\}}}$, is a topological embedding  (proof of Proposition \ref{prop:mappe-domains}(2)  or  \cite[Proposition 2.5]{FiSp}).

We need to show that $\insfinss(D)=\inssemistabft(D)$. Indeed, if $\star\in\insfinss(D)$, then the domain $D^\star$, as an overring of a Pr\"ufer domain, is still  a Pr\"ufer domain. Hence, $  \wedge_{\{D^\star\}} = \widetilde{\, \wedge_{\{D^\star\}}\,}$,   since $D^\star$ is $D$-flat, and $D^\star$ admits a unique star operation of finite type. It follows that $\star|_{\F(D^\star)}:
  \F(D^\star) \rightarrow \F(D^\star)$ is the identity star operation of $D^\star$. On the other hand note that, for each $F \in \f(D)$,
\begin{equation*}
F^\star=(FD)^\star=(FD^\star)^\star=FD^\star\,.
\end{equation*} 
Therefore, we have $\star =  \wedge_{\{D^\star\}}$ and so $\iota$ is surjective.

 The equality $\omega = \Delta\circ\widetilde{\Phi}\circ\iota$ holds in general (see the proof of Proposition \ref{prop:mappe-domains}(2)) 
and the last  claim follows from the fact that $\wedge_{\{T\}} = \widetilde{\,\wedge_{\{T\}}}$, since every overring $T$ of the Pr\"ufer domain $D$ is $D$-flat.
\end{proof}

Recall that  an integral domain  $D$ is a {\it QR-domain} if each overring of $D$ is a ring of fractions of $D$ (for more details see, for example,  \cite{gioh} and \cite{he}). For example, a B\'ezout domain is a QR-domain \cite[page 250 Exercise 10(b)]{gi}.

\begin{cor}\label{prop:QR}
Let $D$ be a QR-domain. Then, the chain of maps
\begin{equation*}
\boldsymbol{\mathcal{S}}(D)  \xlongrightarrow{\ell}  \overr(D)  \xlongrightarrow{\omega}\boldsymbol{\mathcal{X}}(D)
\end{equation*}
 is a chain of homeomorphisms.
\end{cor}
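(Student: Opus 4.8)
The plan is to deduce the corollary from the two preceding results. A QR-domain is, by definition, a domain in which every overring is a localization (a ring of fractions), hence in particular $D$-flat; therefore a QR-domain is a Pr\"ufer domain, and all of Proposition \ref{prufer-case} applies. In particular $\omega\colon\overr(D)\to\xcal(D)$ is a homeomorphism and $\omega(T)=\qspec^{\wedge_{\{T\}}}(D)$ for every overring $T$. So the only thing left to prove is that $\ell\colon\scal(D)\to\overr(D)$ is a homeomorphism; since Proposition \ref{prop:mappe-domains}(1) already gives that $\ell$ is a topological embedding, it suffices to show $\ell$ is surjective.

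First I would take an arbitrary overring $T\in\overr(D)$. Because $D$ is a QR-domain, $T=S^{-1}D$ for some multiplicatively closed subset $S$ of $D$; replacing $S$ by its saturation $\overline S$ (which does not change the localization), we may assume $S$ is saturated and multiplicatively closed. Then $\mathscr Q:=D\setminus S$ is, by Lemma \ref{semigroupprime} together with the Kaplansky characterization of saturated multiplicative sets used in its proof, a semigroup prime of $D$; and $D_{\mathscr Q}=\Sigma_{\mathscr Q}^{-1}D=S^{-1}D=T$, i.e.\ $\ell(\mathscr Q)=T$. (One must note here that $T\ne K$ is not needed: even $T=K$ arises, from $\mathscr Q$ the union of all nonzero primes, and $T=D$ arises from any minimal prime of $D$ if $D$ is a domain, which it is.) Hence $\ell$ is onto, and being a continuous closed injection onto $\overr(D)$ — which it is, by the computation $\ell(\boldsymbol U(f))=\ell(\scal(D))\cap\texttt B(f^{-1})$ in the proof of Proposition \ref{prop:mappe-domains}(1) — it is a homeomorphism.

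Finally I would chain things together: $\omega\circ\ell=j$ by Proposition \ref{prop:mappe-domains}(3), and we have just shown $\ell$ and $\omega$ are each homeomorphisms, so $j$ is a homeomorphism as well (this also recovers, for QR-domains, the surjectivity of $j$ predicted by Theorem \ref{prop:scal-xcal}, since a B\'ezout domain — more generally any QR-domain — has the property that the radical of every finitely generated ideal is the radical of a principal ideal). The statement of the corollary is exactly that $\scal(D)\xrightarrow{\ell}\overr(D)\xrightarrow{\omega}\xcal(D)$ is a chain of homeomorphisms, which is now established.

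\textbf{Main obstacle.} The only non-formal point is the surjectivity of $\ell$, and there the subtlety is purely bookkeeping: one must pass from an arbitrary presentation $T=S^{-1}D$ to a \emph{saturated} multiplicatively closed set, so that its complement is genuinely a semigroup prime in the sense of the paper's definition, and then check the localizations agree. Everything else (flatness of overrings of a QR-domain, hence the Pr\"ufer hypothesis of Proposition \ref{prufer-case}; the identity $\omega\circ\ell=j$) is quoted verbatim from the results already proved.
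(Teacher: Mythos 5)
Your proposal is correct and follows essentially the same route as the paper's proof: $\ell$ is an embedding by Proposition \ref{prop:mappe-domains}(1), the QR hypothesis gives its surjectivity (your saturation argument just makes explicit what the paper leaves implicit), and QR $\Rightarrow$ Pr\"ufer lets Proposition \ref{prufer-case} handle $\omega$. The only slip is in your inessential parenthetical, where the two edge cases are swapped: $T=K$ arises from $\mathscr{Q}=\{0\}$ (the zero prime, whose complement is $D\setminus\{0\}$), while $T=D$ arises from $\mathscr{Q}$ equal to the set of nonunits of $D$, not from a minimal prime.
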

\begin{proof}
 By Proposition \ref{embedd-II}(3), $\ell$ is a topological embedding, and the hypothesis that $D$ is a QR-domain guarantees that $\ell$ is also surjective. Therefore, $\ell$ is a homeomorphism.   Since a QR-domain is -- in particular -- a Pr\"ufer domain   \cite[p. 334]{gi}, then we know from Proposition \ref{prufer-case} that $\omega$ is a homeomorphism.  The claim follows.
\end{proof}

\begin{ex}\label{ex:dedekind}
Consider a Dedekind domain $D$ such that the class group $\Cl(D)$ of $D$ is not a torsion group (an explicit example is   given  by $D:=K[X,Y]/(X^2-Y^3+Y+1)$, where $K$ is an algebraically closed field; see   \cite[Sections 3 and 4]{gioh} and \cite[page 146]{re}, and for a general result \cite[Theorem 14.10]{fossum}). Then, there is a maximal ideal $P$ of $D$ such that the class $[P]$ has infinite order in $\Cl(D)$, i.e., $P^n$ is never principal or, equivalently, no principal ideal is $P$-primary \cite[Proposition 6.8]{fossum}. 
Let $Y:=\spec(D)\setminus\{P\}$: then, $Y$ is closed in the inverse topology,   since it is a quasi-compact open subspace of $\spec(D)$, endowed with the Zariski topology.
 We claim that $Y\notin j(\scal(D))$. If it was, say $Y=j(\mathscr{Q})$, then $\mathscr{Q} \in \scal(D)$ must contain every element of $Y$, but there must be an $x\in P$ such that $x\notin\mathscr{Q}$. However, the ideal $xD$ is not $P$-primary, and so there  also  exists a prime ideal $Q$ of $D$, $Q\neq P$, such that $x\in Q$. This contradicts $Y=j(\mathscr{Q})$, and so $j$ is not surjective.

On the other hand, if $D'$ is a principal ideal domain, then  $j':\scal(D')\rightarrow\xcal(D')$ is surjective (Corollary \ref{prop:QR}). Moreover, we can always   find  a principal ideal domain $D'$ such that the cardinality of $\Max(D')$ is equal to the cardinality of $\Max(D)$ (it suffices to take $D':=F[ T]$, where $F$ is a field with the same cardinality of $\Max(D)$)  and $T$ is an indeterminate over $F$. Then, $\spec(D')$ and $\spec(D)$ are homeomorphic (it is enough to take any bijection between $\Max(D')$ and $\Max(D)$ then extend it to a bijection $\rho:\spec(D')\rightarrow\spec(D)$ such that $\rho((0))=(0)$), but $j'$ is surjective while $j$ is not.
\end{ex}

\begin{oss}\label{rk-dedekind}
  Note that, by \cite[Theorem 2.2]{revi}, when $R:=D$ is a Dedekind domain,  the condition that the canonical map $\boldsymbol{\mathcal{S}}(D) \rightarrow \xcal(D)$ is a homeomorphism and, hence, the equivalent conditions of Corollary \ref{j-homeo} are equivalent to the following:
 \begin{itemize}
 \item[(iv)] The ideal class group of $D$ is torsion.
  \item[(v)] $D$ is a QR-domain.
 \end{itemize}
 \end{oss}

\section{The space of semigroup primes of the Nagata ring}

 Our next goal is to show that, for each ring $R$, the spectral space $\xcal(R)$ can be embedded in a space of prime semigroups   of a different ring $A$: more precisely, we will show that we can choose $A$ to be the Nagata ring of $R$.
   
  \smallskip
   
Recall that, given a ring $R$ and an indeterminate $T$ over $R$, the {\it Nagata ring}  $R(T)$  of $R$   is the localization $S^{-1}R[T] $, where $S$ is the multiplicative set of all the primitive polynomials of $R[T]$.
It is well known \cite[Proposition 33.1(1)]{gi} that $S=R[T]\setminus \bigcup\{M[T]\mid M\in \Max({R}) \}$.  
Let $g: R \hookrightarrow R(T)$ be the canonical embedding. For the sake of simplicity, we identify $R$ with $g({R})$ inside $R(X)$. It is clear that the spectral map $g^a: \spec(R(T)) \rightarrow \spec({R})$ is surjective.  For uses of Nagata rings and related rings of rational functions in the context of star and semistar operations, see \cite{gi}, \cite{fo-lo-2003}, \cite{folo-2006}, \cite{ch-06}, \cite{ch-08}, \cite{ch-10}, \cite{ch-ka}, \cite{do-sa}, \cite{HK-03},  \cite{ka-89}, \cite{ja} and \cite{ok-ma-97}.
 
Now, we consider  another map $\gamma:\spec({R})\rightarrow \spec(R(T))$ by setting $\gamma({P}) := PR(T)$ for each $P \in \spec({R})$: this map is well-defined and injective (since $IR(T)\cap R=I$, for all ideals $I$ of $R$ \cite[Proposition 33.1(4)]{gi}).   Clearly, $\gamma \circ g^a$ is the identity map of $\spec(R)$. Further properties are given next. 

\begin{lemma} \label{gamma} Let  $\gamma:\spec({R})\rightarrow \spec(R(T))$ and $g^a: \spec(R(T)) \rightarrow \spec({R})$ be as above.
\begin{enumerate}
\item[\rm (1)] The map $\gamma$ is a spectral  embedding and  $g^a$ is a spectral retraction. 
\end{enumerate}
 Let $Y$ and $Z$ two nonempty subsets of $\spec({R})$, and, for any $X\subseteq\Spec(R)$, let $\boldsymbol{\ms Q}(X):= \bigcup\{PR(T)\mid P \in X \}\subseteq R(T)$.
\begin{enumerate}
\item[\rm (2)]  If $\chiusinv(Y)=\chiusinv(Z)$, then also $\chiusinv(\gamma(Y))=\chiusinv(\gamma(Z))$. 

\item[\rm (3)]   The equality $\boldsymbol{\ms Q}(Y)=\boldsymbol{\ms Q}(Z)$ holds if and only if $\chiusinv(Y)=\chiusinv(Z)$. 
\end{enumerate}
\end{lemma}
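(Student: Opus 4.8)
The three parts build on one another, so the plan is to dispatch them in order. For part (1), the map $\gamma$ is injective since $PR(T)\cap R=P$, and it is a spectral map because, for a finitely generated ideal $I$ of $R$, one checks $\gamma^{-1}(\texttt{D}(IR(T)))=\texttt{D}(I)$ (equivalently, $IR(T)\subseteq PR(T)$ iff $I\subseteq P$, again using $PR(T)\cap R=P$); to see it is a \emph{topological} embedding onto its image, note $\gamma(\texttt{D}(I))=\gamma(\spec(R))\cap\texttt{D}(IR(T))$, so preimages of basic opens of the subspace are basic opens of $\spec(R)$. Since $g^a\circ\gamma=\ude_{\spec(R)}$ (from $\gamma\circ g^a=\ude$ as recorded before the lemma — I will use whichever composition order is correct: $g^a(PR(T))=PR(T)\cap R=P$), $g^a$ is a retraction, and $g^a$ is spectral because $g$ is a ring homomorphism.

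For part (2), the cleanest route is to use the identity $\chiusinv(Y)=(\chiuscons(Y))^{\mbox{\tiny\texttt{gen}}}$ recalled in the preliminaries, together with the fact that $\gamma$ is a spectral embedding. A spectral map is continuous for the constructible topologies, hence $\gamma(\chiuscons(Y))\subseteq\chiuscons(\gamma(Y))$; and since $\gamma$ preserves the order (it is order-preserving as a spectral embedding, and $P\subseteq P'$ iff $PR(T)\subseteq P'R(T)$), it also respects closure under generizations appropriately. Combining these, $\chiusinv(Y)=\chiusinv(Z)$ forces $\chiusinv(\gamma(Y))$ and $\chiusinv(\gamma(Z))$ to have the same inverse closure. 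Alternatively — and perhaps more transparently — I would argue directly: $\chiusinv(Y)=\bigcap\{\texttt{D}(J)\mid J\text{ f.g.},\ Y\subseteq\texttt{D}(J)\}$, and $Y\subseteq\texttt{D}(J)$ iff $\gamma(Y)\subseteq\texttt{D}(JR(T))$ (using $I\subseteq P\Leftrightarrow IR(T)\subseteq PR(T)$), so the two inverse closures are governed by the same family of conditions; since every quasi-compact open of $\spec(R(T))$ meeting $\gamma(\spec(R))$ is of this form up to the constraints that matter, equality follows.

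For part (3), the equivalence $\boldsymbol{\ms Q}(Y)=\boldsymbol{\ms Q}(Z)\Leftrightarrow\chiusinv(Y)=\chiusinv(Z)$ should be obtained by transporting Remark \ref{chiusinv} through $\gamma$. Indeed $\boldsymbol{\ms Q}(X)=\bigcup\{PR(T)\mid P\in X\}=\mathscr{P}(\gamma(X))$, where $\mathscr{P}$ is the union-of-primes operator of $\spec(R(T))$. By Remark \ref{chiusinv} applied in $\spec(R(T))$, $\mathscr{P}(\gamma(Y))=\mathscr{P}(\gamma(Z))$ whenever $\chiusinv(\gamma(Y))=\chiusinv(\gamma(Z))$, which by part (2) holds as soon as $\chiusinv(Y)=\chiusinv(Z)$; that is the ``if'' direction. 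For the ``only if'' direction, suppose $\chiusinv(Y)\neq\chiusinv(Z)$, say there is $P_0\in\chiusinv(Y)\setminus\chiusinv(Z)$ (after possibly swapping $Y,Z$); then there is a finitely generated ideal $J$ with $Z\subseteq\texttt{D}(J)$ but $P_0\notin\texttt{D}(J)$, i.e.\ $J\subseteq P_0$ and $J\not\subseteq Q$ for all $Q\in Z$. Picking $x\in J$, one has $x\in P_0\subseteq\mathscr{P}(\chiusinv(Y))=\mathscr{P}(Y)$, hence $x\in P$ for some $P\in Y$, so $xR(T)\subseteq PR(T)\subseteq\boldsymbol{\ms Q}(Y)$, giving $x\in\boldsymbol{\ms Q}(Y)$; on the other hand $x\notin Q$ for every $Q\in Z$ forces $x\notin\boldsymbol{\ms Q}(Z)$ (since $xR(T)\subseteq QR(T)$ would imply, intersecting with $R$, that $x\in Q$) — contradiction with $\boldsymbol{\ms Q}(Y)=\boldsymbol{\ms Q}(Z)$. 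The main obstacle I anticipate is bookkeeping in part (2): making sure that when I restrict quasi-compact opens of $\spec(R(T))$ to $\gamma(\spec(R))$ I really do recover \emph{all} the inverse-closed constraints on $Y$, for which the key facts are $PR(T)\cap R=P$ and the surjectivity of $g^a$, which together guarantee that the correspondence $I\leftrightarrow IR(T)$ faithfully reflects containments into primes.
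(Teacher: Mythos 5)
The decisive gap is in the ``only if'' half of (3). After producing a finitely generated ideal $J=(a_0,\dots,a_n)R$ with $J\subseteq P_0$, $P_0\in\chiusinv(Y)$, and $J\not\subseteq Q$ for every $Q\in Z$, you pick a single element $x\in J$ and assert that $x\notin Q$ for every $Q\in Z$. This does not follow: $J\not\subseteq Q$ only gives, for each individual $Q\in Z$, some element of $J$ outside $Q$, and that element varies with $Q$; for an infinite family of primes there need be no common choice, and this failure of ``uniform prime avoidance'' is precisely the phenomenon the paper revolves around (Theorem \ref{prop:scal-xcal}(iii), Example \ref{ex:dedekind}). Worse, no element of $R$ can work in general: for $x\in R$ one has $x\in\boldsymbol{\ms Q}(X)$ iff $x\in\mathscr{P}(X)$ (because $PR(T)\cap R=P$), so elements of $R$ can separate $\boldsymbol{\ms Q}(Y)$ from $\boldsymbol{\ms Q}(Z)$ only when $\mathscr{P}(Y)\neq\mathscr{P}(Z)$, while one can have $\mathscr{P}(Y)=\mathscr{P}(Z)$ with $\chiusinv(Y)\neq\chiusinv(Z)$ (in the Dedekind domain of Example \ref{ex:dedekind} take $Y=\spec(D)$ and $Z=\spec(D)\setminus\{P\}$). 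The whole reason for passing to $R(T)$ --- and the step your argument never invokes --- is that the single element $f:=a_0+a_1T+\cdots+a_nT^n\in R(T)$ detects containment of the entire ideal $J$ in a prime of $R$: since $QR(T)\cap R[T]=QR[T]$, one has $f\in QR(T)$ iff all the $a_i$ lie in $Q$, iff $J\subseteq Q$. Hence $X\subseteq \texttt{D}(J)$ iff $f\notin\boldsymbol{\ms Q}(X)$, so $\boldsymbol{\ms Q}(Y)=\boldsymbol{\ms Q}(Z)$ immediately forces $Y$ and $Z$ to lie in the same quasi-compact opens $\texttt{D}(J)$, i.e.\ $\chiusinv(Y)=\chiusinv(Z)$; this is how the paper argues.

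The same missing ingredient (contents of polynomials) leaves smaller holes in (1) and in your second route for (2): you only compute $\gamma^{-1}$ on opens of the form $\texttt{D}(IR(T))$ with $I$ finitely generated in $R$, but these are neither a basis of the topology of $\spec(R(T))$ nor do they exhaust its quasi-compact opens, even up to radical (for $R=k[u,v]$ and $f=u+vT$, the prime $(u+vT)R(T)$ contains $f$ but meets $R$ only in $0$, so $\texttt{D}(fR(T))$ is not of the form $\texttt{D}(IR(T))$). Thus continuity and spectrality of $\gamma$ are not yet established; the needed computation, as in the paper, is $\gamma^{-1}\left(\texttt{D}\left(\frac{f}{p}R(T)\right)\right)=\texttt{D}(C)$ with $C$ the content of $f$, and your proposed ``key facts'' ($PR(T)\cap R=P$, surjectivity of $g^a$) do not supply it. By contrast, your first route for (2) is sound once spectrality of $\gamma$ is in hand: $\chiusinv(\cdot)=(\chiuscons(\cdot))^{\mbox{\tiny\texttt{gen}}}$, constructible continuity and order preservation give $\gamma(\chiusinv(Y))\subseteq\chiusinv(\gamma(Y))$, whence (2) for an arbitrary spectral map --- a more abstract argument than the paper's explicit content computation (even more directly: $\gamma(Y)\subseteq U$ iff $Y\subseteq\gamma^{-1}(U)$, and $\gamma^{-1}(U)$ is quasi-compact open, hence inverse-closed). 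The ``if'' half of (3) via Remark \ref{chiusinv} and your treatment of the embedding and retraction in (1) match the paper.
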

\begin{proof}
(1) Take a  nonzero  element $f/p\in R(T)$, where $f, p\in R[T]$ and $p$ is primitive, and write $f: =  a_0+a_1T+\ldots + a_nT^n$. For any prime ideal $P$ of $R$, we have:
$$
\frac{f}{p}\notin PR(T) \iff  f\notin PR[T]\iff    P\nsupseteq   (a_0, a_1\ldots,a_n)R,
$$
that is, $\gamma^{-1}\left(\texttt{D}\left(\frac{f}{p}R(T)\right)\right)=\texttt{D}((a_0,a_1, \dots, a_n)R)$. 
This proves that $\gamma$ is   (continuous and) spectral. Moreover, for each $x\in R$ we have $\gamma(\texttt{D}(xR))=\texttt{D}(xR(T))\cap {\rm Im}(\gamma)$, and thus $\gamma$ is a topological embedding.   The conclusion follows from the fact that $g^a \circ \gamma$ is the identity of $\spec({R})$.

(2) Assume that $\chiusinv(Y)=\chiusinv(Z)$. By definition, a basis for closed sets for the inverse topology of $\spec(R(T))$ is given by the quasi-compact open subspaces of $\spec(R(T))$ (when endowed with the Zariski topology). Thus, we have to prove that, for any nonzero finitely generated ideal $J$ of $R(T)$, we have $\gamma(Y)\subseteq \texttt{D}(J)$ if and only if $\gamma(Z)\subseteq \texttt{D}(J)$.
 Let $\frac{f_1}{p_1}, \frac{f_2}{p_2},\ldots, \frac{f_r}{p_r}\in R(T)$ be generators of the ideal $J$,   where $f_i, p_i\in R[T]$ and $p_i$ is primitive, for $i=1, 2, \ldots,r$,  and  let $C_i$ be the content of $f_i$. 
 Then $\gamma(Y)\subseteq \texttt{D}(J)$ if and only if for any $P\in Y$ there is some index $i$ such that $\frac{f_i}{p_i}\notin PR(T)$,
  that is $f_i\notin PR[T]= PR(T)\cap R[T]$. 
  In other words,  $ P \nsupseteq C_i$, i.e., $P \in \texttt{D}(C_i)$. 
   If we set $C :=C_1+ C_2+ \ldots+C_r$, the previous argument shows that $\gamma(Y)\subseteq \texttt{D}(J)$ if and only if $Y\subseteq \texttt{D}({C})$.
   Since $C$ is a finitely generated ideal of $R$, the set $D({C})$ is a quasi-compact open subspace of $\spec({R})$,  and thus also $Z\subseteq \texttt{D}({C})$, because  $Y$ and $Z$ have the same closure, with respect to the inverse topology. 
   Thus, any prime ideal $Q$ of $Z$ does not contain some coefficient of some polynomial $f_i$, and then $\frac{f_i}{p_i}\notin QR(T)$, that is $\gamma(Z)\subseteq \texttt{D}(J)$.  

(3) If $\chiusinv(Y)=\chiusinv(Z)$ then $\chiusinv(\gamma(Y))=\chiusinv(\gamma(Z))$, by part (2). Thus, the equality $\boldsymbol{\ms Q}(Y)=\boldsymbol{\ms Q}(Z)$ holds by Remark \ref{chiusinv}. Conversely, assume that $\boldsymbol{\ms Q}(Y)=\boldsymbol{\ms Q}(Z)$, and let $ J:= (a_0, a_1, \ldots,a_n)R$ be a nonzero finitely generated ideal of $R$. We have to prove that 
$Y\subseteq \texttt{D}(J)$ if and only if $Z\subseteq \texttt{D}(J)$. Suppose that $Y\subseteq \texttt{D}(J)$. Then, if $f:=  a_0+a_1T+\ldots +a_nT^n  \in R[T]$, we have $f\notin PR[T]= PR(T)\cap R[T]$, for each $P\in Y$, that is $\frac{f}{1}\notin \boldsymbol{\ms Q}(Y)= \boldsymbol{\ms Q}(Z)$.
 In other words, $f\notin QR[T]$, for each $Q\in Z$, i.e., $Z\subseteq \texttt{D}(J)$. 
\end{proof}

Now, we are in condition to prove that the spectral space $\xcal({R})$ can be embedded in the spectral space of prime semigroups of the Nagata ring $R(T)$.

\begin{prop}\label{nagata}
Let $R$ be a ring, $j:\scal({R}) \hookrightarrow \xcal({R})$  the spectral embedding defined in   Proposition \ref{embedd-II}(1),   $g: R \hookrightarrow R(T)$  the canonical ring embedding and let $\scal(g): \scal(R(T)) \rightarrow \scal({R})$   be the spectral map associated to $g$ defined in \eqref{eq:scalf}.   Define $\boldsymbol{\nu}$ as the map
\begin{equation*}
\begin{aligned}
\boldsymbol{\nu}\colon\xcal(R) & \longrightarrow\scal(R(T)) \\
Y & \longmapsto \bigcup\{PR(T) \mid P \in Y \}.
\end{aligned}
\end{equation*}
The following properties hold.
\begin{enumerate}
\item[{\rm (1)}] $\boldsymbol{\nu}$ is a spectral embedding.
\item[{\rm (2)}]   $\scal(g) \circ \boldsymbol{\nu} \circ j $ is the identity of $\scal({R})$.  In particular, 
$\scal(g): \scal(R(T)) \rightarrow \scal({R})$ is a topological retraction.
\item[{\rm (3)}]  If $\mathscr{P}  : \xcal(R)  \rightarrow \scal({R}) $ is the map defined in Proposition \ref{homeo}, then $\mathscr{P}=\scal(g)\circ\boldsymbol{\nu}$.
\end{enumerate}
\end{prop}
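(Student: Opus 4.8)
The plan is to establish the three assertions in the order (1), (3), (2), since once (3) is available, assertion (2) follows from it together with Proposition \ref{homeo}(2). Throughout I identify $R$ with its image $g(R)\subseteq R(T)$, so that $\scal(g)(\mathscr{Q})=\mathscr{Q}\cap R$ for $\mathscr{Q}\in\scal(R(T))$, and I use the standard facts that $PR(T)\cap R=P$ for every prime $P$ of $R$ \cite[Proposition 33.1(4)]{gi} and that a primitive polynomial is a unit of $R(T)$. Note first that $\boldsymbol{\nu}(Y)$ is precisely the set $\boldsymbol{\ms Q}(Y)$ appearing in Lemma \ref{gamma}: since each $PR(T)$ (for $P\in Y$) is a prime ideal of $R(T)$ and $Y\neq\emptyset$, the union $\boldsymbol{\nu}(Y)$ is a proper (because $1\notin PR(T)$) semigroup prime of $R(T)$ by the observation preceding Lemma \ref{semigroupprime}; hence $\boldsymbol{\nu}$ is well defined.

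For (1), injectivity of $\boldsymbol{\nu}$ is immediate from Lemma \ref{gamma}(3): if $\boldsymbol{\nu}(Y_1)=\boldsymbol{\nu}(Y_2)$ then $\chiusinv(Y_1)=\chiusinv(Y_2)$, and since $Y_1,Y_2\in\xcal(R)$ are inverse-closed, $Y_1=Y_2$. To see that $\boldsymbol{\nu}$ is continuous and spectral I would compute, for a basic quasi-compact open set $\boldsymbol{U}(f/p)$ of $\scal(R(T))$ with $p\in R[T]$ primitive and $f=a_0+a_1T+\cdots+a_nT^n\in R[T]$, that $f/p\notin PR(T)$ for all $P\in Y$ precisely when $Y\subseteq\texttt{D}(C)$, where $C:=(a_0,\dots,a_n)R$ is the content of $f$; this is exactly the content-ideal argument carried out in the proof of Lemma \ref{gamma}(2). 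Thus $\boldsymbol{\nu}^{-1}(\boldsymbol{U}(f/p))=\ucal(\texttt{D}(C))$, a basic quasi-compact open subspace of $\xcal(R)$. Conversely, a basic open set $\ucal(\texttt{D}(J))$ of $\xcal(R)$ with $J=(a_0,\dots,a_n)R$ equals $\boldsymbol{\nu}^{-1}(\boldsymbol{U}(f))$ for $f:=a_0+a_1T+\cdots+a_nT^n$, whose content is $J$; by injectivity this gives $\boldsymbol{\nu}(\ucal(\texttt{D}(J)))=\boldsymbol{\nu}(\xcal(R))\cap\boldsymbol{U}(f)$, so $\boldsymbol{\nu}$ is open onto its image. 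Being a continuous, injective, spectral map that is open onto its image, $\boldsymbol{\nu}$ is a spectral embedding.

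For (3), let $Y\in\xcal(R)$; since preimages commute with unions and $PR(T)\cap R=P$,
\[
\scal(g)(\boldsymbol{\nu}(Y))=\Big(\bigcup\{PR(T)\mid P\in Y\}\Big)\cap R=\bigcup\{PR(T)\cap R\mid P\in Y\}=\bigcup\{P\mid P\in Y\}=\mathscr{P}(Y),
\]
so $\scal(g)\circ\boldsymbol{\nu}=\mathscr{P}$. Then (2) follows at once: composing with $j$ and using Proposition \ref{homeo}(2), $\scal(g)\circ\boldsymbol{\nu}\circ j=\mathscr{P}\circ j=\mathrm{id}_{\scal(R)}$; in particular $\boldsymbol{\nu}\circ j$ is a continuous section of the spectral map $\scal(g)$ (Proposition \ref{prop:scal-functor}(1)), whence $\scal(g)$ is surjective and therefore a topological retraction.

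The genuinely substantial work — relating $\boldsymbol{\nu}^{-1}(\boldsymbol{U}(f/p))$ to sets of the form $\ucal(\texttt{D}(C))$ via the content ideal, together with the injectivity of $\boldsymbol{\nu}$ — is already packaged into Lemma \ref{gamma}, so I anticipate no real obstacle. The one point deserving a line of care is the ``open onto its image'' step in (1): one must observe that the finitely generated ideals $J$ of $R$ are exactly the contents of polynomials of $R[T]$, so that every basic open $\ucal(\texttt{D}(J))$ of $\xcal(R)$ is the $\boldsymbol{\nu}$-preimage of a basic open of $\scal(R(T))$; once this is in place, the verification reduces to bookkeeping.
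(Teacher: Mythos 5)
Your proof is correct and follows essentially the same route as the paper: injectivity of $\boldsymbol{\nu}$ via Lemma \ref{gamma}(3), the content-ideal computation $\boldsymbol{\nu}^{-1}(\boldsymbol{U}(f/p))=\ucal(\texttt{D}(C))$ (and its converse $\boldsymbol{\nu}(\ucal(\texttt{D}(J)))=\Ima(\boldsymbol{\nu})\cap\boldsymbol{U}(f)$) for the spectral embedding, and $PR(T)\cap R=P$ for the identifications in (2) and (3). The only difference is organizational: you prove (3) first and deduce (2) from $\mathscr{P}\circ j=\mathrm{id}$ (Proposition \ref{homeo}(2)), whereas the paper verifies (2) directly; the content is the same.
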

\begin{proof} (1).
By Lemma \ref{gamma}(3), the map $\boldsymbol{\nu}$ is injective. 
Now, let $  0 \neq \frac{f}{p}\in R(T)$, where $f, p\in R[T]$ and $p$ is primitive and let $C$ be the content of the polynomial $f$. Then,  using the notation of Lemma \ref{gamma}(3), \begin{equation*}
\begin{array}{rl}
\boldsymbol{\nu}^{-1}\left(\boldsymbol{U}\left(\frac{f}{p}\right)\right)= & \hskip -6pt \{Y\in \xcal({R}) \mid \frac{f}{p}\notin \boldsymbol{\ms Q}(Y) \} \\
= &\hskip -6pt  \{Y\in \xcal({R}) \mid f\notin PR[T] \mbox{ for all } P \in Y \}=\boldsymbol{\mathcal{U}}(\texttt{D}({C})).
\end{array}
\end{equation*}
This proves that $\boldsymbol{\nu}$ is   continuous and  spectral. On the other hand, with similar arguments, it can be shown that, given 
$  a_0, a_1, \ldots, a_n \in R$, if $f:=  a_0+a_1T+\ldots+a_nT^n  \in R[T]$ we have 
$$
\boldsymbol{\nu}\left(\texttt{D}(a_0, a_1, \ldots a_n)\right)={\rm Im}(\boldsymbol{\nu})\cap \boldsymbol{U}\left(\frac{f}{1}\right),
$$
that is, $\boldsymbol{\nu}$ is a topological embedding.

(2)    Let $ \mathscr{P} \in \scal({R})$. Let $Y$ be a nonempty set of prime ideals of $R$ such that $ \mathscr{P} = \mathscr{P}(Y) =\bigcup \{P \in \spec({R})\mid P\in Y\}$  (Lemma \ref{semigroupprime}). 
Set $\boldsymbol{\ms Q}(Y) := \bigcup \{PR(T)\mid P\in Y\} \in \scal(R(T))$.
Recall that, for each prime ideal $P\in \spec({R})$, $PR(T) \cap R =g^{-1}(PR(T))=P$ \cite[Proposition 33.1(4)]{gi}.  
Then, 
\begin{equation*}
\begin{array}{rl}
\scal(g) \circ \boldsymbol{\nu} \circ j(\mathscr{P})  
= &\hskip -6pt  \scal(g)(\boldsymbol{\ms Q}(Y)) = g^{-1}(\boldsymbol{\ms Q}(Y)) \\
= &\hskip -6pt  \left(\bigcup \{PR(T)\mid P\in Y\} \right) \cap R \\ 
 =&  \hskip -6pt  \bigcup \left(\{PR(T) \mid P\in Y\} \cap R\right) \\
= &\hskip -6pt\bigcup \{P \in \spec(R)\mid P\in Y\} = \mathscr{P}\,.
\end{array}
\end{equation*}

(3) Let $Y\in\xcal(R)$. Then, we have
\begin{equation*}
(\scal(g)\circ\boldsymbol{\nu})(Y)=g^{-1}(\boldsymbol{\nu}(Y))=g^{-1}\left(\bigcup_{P\in Y}PR(T)\right)= \bigcup_{P\in Y}g^{-1}(PR(T)).
\end{equation*}
However,  as noted above, $g^{-1}(PR(T))=P$ for every $P\in\Spec(R)$, and thus $(\scal(g)\circ\boldsymbol{\nu})(Y)=\bigcup\{P\mid P\in Y\}$, which is exactly the definition of $\mathscr{P}(Y)$.
\end{proof}

 We now introduce some notation that will be used in the following  Remark \ref{oss:flattop} and  Proposition \ref{chi}, where we will show that, given a ring $R$,   
$\xcal({R})$ is a topological retract of the spectral space $\scal(R(T))$.

 If $\mathscr{Q} \in \scal(R(T))$, then we set  $\Sigma_{\mathscr{Q} }:= R(T) \setminus \mathscr{Q} $,
 $ R(T)_{\mathscr{Q} } := \Sigma_{\mathscr{Q} }^{-1}R(T)$. 
 We denote by $g: R \rightarrow R(T)$ and $\lambda_1:  R(T) \rightarrow  R(T)_{\mathscr{Q} } $  the canonical flat homomorphisms and we set $ \lambda := \lambda_1 \circ g : R \rightarrow R(T)_{\mathscr{Q} }$. 
 
  \begin{oss} \label{oss:flattop} 
  In \cite{dofopa} the authors introduced and studied what they called the {\it  flat topology} on   $\spec( R)$, where $R$ is any ring, by taking as closed subspaces the subset $\rho^a(\spec(R'))$  for  $\rho: R \rightarrow R'$  varying among the flat ring homomorphisms. By \cite[Theorem 2.2]{dofopa} the flat topology on  $\spec( R)$ coincides with the inverse topology. 
  
  We are in condition to give an explicit description of the inverse-closed subspaces of $\spec({R})$.
  Let $Y \subseteq \spec({R})$, set as above $\boldsymbol{\ms Q}(Y) := \bigcup \{PR(T)\mid P\in Y\} \in \scal(R(T))$. 
  Then, it is straightforward to see  that $\boldsymbol{\ms Q}(Y)=\boldsymbol{\ms Q}(\lambda^a(\spec(R(T)_{\boldsymbol{\ms Q}(Y)})))$, 
  where $\lambda: R\rightarrow R(T)_{\boldsymbol{\ms Q}(Y)}$ is the canonical flat embedding. Thus, in view of Lemma \ref{gamma}(3) and of the fact that the image of $\lambda^a$ is closed in the inverse topology, being $\lambda$ flat, we have $\chiusinv(Y)=\lambda^a(\spec(R(T)_{\boldsymbol{\ms Q}(Y)}))$. In particular,    
 $ Y = \chiusinv(Y) $ if and only if   $Y = \lambda^a(\spec(R(T)_{\boldsymbol{\ms Q}(Y)}))$.
  \end{oss}
  
\begin{prop} \label{chi}
Let $R$ be   a ring. With the notation introduced above, the map

\begin{equation*}
\begin{aligned}
\boldsymbol{\chi}\colon \scal(R(T)) & \longrightarrow\xcal({R}) \\
\mathscr{Q} & \longmapsto \lambda^a(\spec(R(T)_{\mathscr{Q}}))
\end{aligned}
\end{equation*}
is continuous and surjective. 
Moreover, if $\boldsymbol{\nu}:\xcal({R}) \hookrightarrow \scal(R(T))  $ is
 the spectral embedding defined in Proposition \ref{nagata}(1), then $\boldsymbol{\chi}\circ\boldsymbol{\nu}$ is the identity on $\xcal({R})$.
\end{prop}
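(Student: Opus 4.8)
The claim splits into two parts: $\boldsymbol{\chi}$ is continuous and surjective, and $\boldsymbol{\chi}\circ\boldsymbol{\nu}=\ude_{\xcal(R)}$. The plan is to treat the last identity first, since it is the cleanest and will also hand us surjectivity for free. Take $Y\in\xcal(R)$. By definition $\boldsymbol{\nu}(Y)=\boldsymbol{\ms Q}(Y)=\bigcup\{PR(T)\mid P\in Y\}$, a semigroup prime of $R(T)$; write $\mathscr{Q}:=\boldsymbol{\nu}(Y)$ and $\Sigma_{\mathscr Q}:=R(T)\setminus\mathscr{Q}$. Then $\boldsymbol{\chi}(\mathscr Q)=\lambda^a(\spec(R(T)_{\mathscr Q}))$ where $\lambda:R\to R(T)_{\mathscr Q}$ is the composite flat map. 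Now invoke Remark \ref{oss:flattop}: since $\mathscr{Q}=\boldsymbol{\ms Q}(Y)$, that remark says precisely $\chiusinv(Y)=\lambda^a(\spec(R(T)_{\boldsymbol{\ms Q}(Y)}))$, and because $Y\in\xcal(R)$ we have $Y=\chiusinv(Y)$. Hence $\boldsymbol{\chi}(\boldsymbol{\nu}(Y))=Y$, which is the asserted identity. This immediately forces $\boldsymbol{\chi}$ to be surjective, since every $Y\in\xcal(R)$ is hit by $\boldsymbol{\nu}(Y)$.

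Next, continuity of $\boldsymbol{\chi}$. Recall a basic open set of $\xcal(R)$ is $\ucal(\texttt{D}(J))=\{Z\in\xcal(R)\mid Z\subseteq\texttt{D}(J)\}$ with $J=(a_0,\dots,a_n)R$ a finitely generated ideal of $R$; put $f:=a_0+a_1T+\cdots+a_nT^n\in R[T]$. The plan is to show $\boldsymbol{\chi}^{-1}(\ucal(\texttt{D}(J)))=\boldsymbol{U}(f/1)$, a basic open set of $\scal(R(T))$. For a semigroup prime $\mathscr{Q}$ of $R(T)$ with $\Sigma_{\mathscr Q}=R(T)\setminus\mathscr Q$, unwinding the definitions: $\boldsymbol{\chi}(\mathscr Q)=\{P\in\spec(R)\mid P\cap\Sigma_{\mathscr Q}\cap R=\emptyset\}$ after identifying $R$ with $g(R)\subseteq R(T)$ — more precisely $\lambda^a(\spec(R(T)_{\mathscr Q}))=\{g^{-1}(Q')\mid Q'\in\spec(R(T)),\ Q'\cap\Sigma_{\mathscr Q}=\emptyset\}$. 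So $\boldsymbol{\chi}(\mathscr Q)\subseteq\texttt{D}(J)$ iff no prime $Q'$ of $R(T)$ disjoint from $\Sigma_{\mathscr Q}$ has $JR(T)\subseteq Q'$ (equivalently $f\in Q'$, since $\rad(fR(T))=\rad(JR(T))$ by Gauss/content considerations), iff $JR(T)$ meets $\Sigma_{\mathscr Q}$, iff $f\notin\mathscr Q$ (again using that $\rad(fR(T))=\rad(JR(T))$ and $\mathscr Q$ is radical-saturated as a semigroup prime). That last chain is exactly $\mathscr Q\in\boldsymbol{U}(f/1)$. Running the argument backward over all such $J$ gives continuity.

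The main obstacle I anticipate is the middle equivalence in the continuity argument: passing from ``$\boldsymbol{\chi}(\mathscr Q)\subseteq\texttt{D}(J)$'' to ``$f\notin\mathscr Q$.'' This needs two facts handled carefully. First, $\rad(fR(T))=\rad(JR(T))$, i.e. a prime of $R(T)$ contains $f$ iff it contains all the $a_i$; this is the standard content-ideal computation already used in the proof of Lemma \ref{gamma}(1), where $\frac{f}{p}\notin PR(T)\iff P\nsupseteq(a_0,\dots,a_n)R$, extended from primes extended from $R$ to arbitrary primes of $R(T)$ via $g^a$ being surjective and the fact that every prime of $R(T)$ is $g^a(Q')R(T)$-related. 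Second, the translation ``$JR(T)$ meets $\Sigma_{\mathscr Q}$'' $\iff$ ``$f\notin\mathscr Q$'': the forward direction uses that $\mathscr Q$ is closed under taking divisors of elements of $\mathscr Q$ only in the contrapositive, while to go from $f\notin\mathscr Q$ to ``$JR(T)\not\subseteq$ every prime disjoint from $\Sigma_{\mathscr Q}$'' one localizes at $\Sigma_{\mathscr Q}$ and picks a minimal prime over $JR(T)_{\mathscr Q}=fR(T)_{\mathscr Q}$ up to radical — this is the same maneuver as in the proof of Proposition \ref{embedd-II}(1). Once these two are in place the rest is bookkeeping, and I would also remark that surjectivity alternatively follows directly from $\boldsymbol{\chi}\circ\boldsymbol{\nu}=\ude$, so it need not be argued separately.
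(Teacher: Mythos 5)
Your treatment of the identity $\boldsymbol{\chi}\circ\boldsymbol{\nu}=\mathrm{id}_{\xcal(R)}$ and of surjectivity is correct and is exactly the paper's argument (both rest on Remark \ref{oss:flattop}). The gap is in your continuity argument. You claim $\boldsymbol{\chi}^{-1}(\ucal(\texttt{D}(J)))=\boldsymbol{U}(f/1)$ for $J=(a_0,\dots,a_n)R$ and $f=a_0+a_1T+\dots+a_nT^n$, resting on the assertion that $\rad(fR(T))=\rad(JR(T))$, i.e.\ that a prime of $R(T)$ contains $f$ if and only if it contains all the $a_i$. This is true only for primes \emph{extended} from $R$ (which is all that Lemma \ref{gamma}(1) and Proposition \ref{nagata}(1) use, since there one only meets unions of ideals $PR(T)$), but it fails for arbitrary primes of $R(T)$, and your attempted bridge (``every prime of $R(T)$ is $g^a(Q')R(T)$-related'') has no content: $\spec(R(T))\to\spec(R)$ is surjective but not injective, and a non-extended prime can contain $f$ without containing its content. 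Concretely, let $R:=k[x,y]_{(x,y)}$, $f:=xT-y$, $J:=(x,y)R$. Every element of $(xT-y)R[T]$ has content inside the maximal ideal, so $Q':=(xT-y)R(T)$ is a prime of $R(T)$, and $Q'\cap R=(0)$. Taking $\mathscr{Q}:=Q'\in\scal(R(T))$, every prime contained in $\mathscr{Q}$ contracts to $(0)$, so $\boldsymbol{\chi}(\mathscr{Q})=\{(0)\}\subseteq\texttt{D}(J)$, yet $f\in\mathscr{Q}$, so $\mathscr{Q}\notin\boldsymbol{U}(f/1)$. Hence only the inclusion $\boldsymbol{U}(f/1)\subseteq\boldsymbol{\chi}^{-1}(\ucal(\texttt{D}(J)))$ holds, and your claimed equality (and the intermediate equivalence ``$JR(T)$ meets $\Sigma_{\mathscr{Q}}$ iff $f\notin\mathscr{Q}$'') is false.

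Continuity itself is of course still true, but it needs a different argument. The paper avoids the issue entirely by factoring $\boldsymbol{\chi}=\xcal(g^a)\circ j_{R(T)}$, where $j_{R(T)}:\scal(R(T))\hookrightarrow\xcal(R(T))$ is the embedding of Proposition \ref{embedd-II}(1) and $\xcal(g^a)$ is the spectral map of \cite[Proposition 4.1]{FiFoSp-X(X)}, so that $\boldsymbol{\chi}$ is a composition of continuous maps. Alternatively, you could repair your direct approach by showing the preimage is open without identifying it with a single basic set: if $\boldsymbol{\chi}(\mathscr{Q})\subseteq\texttt{D}(J)$, then $JR(T)\not\subseteq\mathscr{Q}$ (otherwise $JR(T)$ would be disjoint from $\Sigma_{\mathscr{Q}}$ and a prime $Q'\subseteq\mathscr{Q}$ containing $JR(T)$ would give $Q'\cap R\in\boldsymbol{\chi}(\mathscr{Q})\setminus\texttt{D}(J)$), so there is $s\in JR(T)\setminus\mathscr{Q}$, and then $\mathscr{Q}\in\boldsymbol{U}(s)\subseteq\boldsymbol{\chi}^{-1}(\ucal(\texttt{D}(J)))$; this is the argument of Proposition \ref{embedd-II}(1) transplanted, not the content-ideal computation you invoked.
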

\begin{proof}
Note that $\boldsymbol{\chi}$ is well-defined by Remark \ref{oss:flattop}, since,   for any $\ms Q\in \scal(R(T))$, the canonical homomorphism $\lambda: R \rightarrow R(T)_{\mathscr{Q} }$ is flat.
 Let $\xcal(g^a) : \xcal(R(T)) \rightarrow \xcal(R)$ 
 be the spectral   map associated  to the canonical  flat ring embedding $g: R \rightarrow R(T)$ 
 and defined by  $\xcal(g^a)(Y):=   g^a(Y)^{\mbox{\tiny \texttt{gen}}}=g^a(Y) = \{g^{-1}(Q) \mid Q \in Y \}= \{Q \cap R \mid Q \in Y \}$, 
 for each $Y \in  \xcal(R(T))$ (see  \cite[Proposition 4.1]{FiFoSp-X(X)}  and \cite[Proposition 2.7]{dofopa}). 
 Then, the map  $\boldsymbol{\chi}$ coincides with the composition of the topological embedding 
$j: \scal(R(T))  \hookrightarrow \xcal(R(T)) $  (Proposition \ref{embedd-II}(1)) with $\xcal(g^a)$. 
In fact, 
\begin{equation*}
\begin{array}{rl}
 (\xcal(g^a) \circ j)(\mathscr{Q}) =&  \xcal(g^a)(\{Q \in  \spec(R(T)) \mid Q \subseteq \mathscr{Q}\}) \\
 =&  
\xcal(g^a)(\lambda^a(\spec(R(T))_{\mathscr{Q} })) =g^a(\lambda^a(\spec(R(T)_{\mathscr{Q} })))  \\
 =& \xcal(\lambda^a)(\spec(R(T)_{\mathscr{Q} })) = \lambda^a(\spec( R(T)_{\mathscr{Q} }) = \boldsymbol{\chi}( \mathscr{Q}))\,.
 \end{array}
\end{equation*}
Hence $\boldsymbol{\chi}$ is continuous as a composition of continuous maps  (Proposition \ref{embedd-II}(1) and \cite[Proposition 4.1]{FiFoSp-X(X)}).

Let now $Y\in\xcal({R})$.  Set,  as usual,  $\mathscr{Q}(Y) := \bigcup\{ PR(T) \mid P \in Y\}$. Then, a direct calculation shows that $(\boldsymbol{\chi}\circ\boldsymbol{\nu})(Y)$ is the  canonical image of $\spec(R(T)_{\mathscr{Q}(Y)})$  into  $\spec({R})$, which is is clearly equal to $Y$ (Remark \ref{oss:flattop}). Therefore $\boldsymbol{\chi}\circ\boldsymbol{\nu}$ is the identity. This  implies  that $\boldsymbol{\chi}$ is surjective.
\end{proof}

\begin{figure}
\begin{equation*}
\begin{tikzcd}[column sep=large]
\Spec(R)\arrow[hook]{r}{i}\arrow[bend left,hook]{rr}{\varphi} & \scal(R)\arrow[transform canvas={yshift=.5ex},hook]{r}{j_R} & \arrow[transform canvas={yshift=-0.5ex},two heads]{l}{\mathscr{P}_R}\xcal(R)\arrow[transform canvas={yshift=0.5ex},hook]{r}{\boldsymbol{\nu}} & \arrow[bend left,two heads]{ll}{\scal(g)}\scal(R(T))\arrow[transform canvas={yshift=-0.5ex},two heads]{l}{\boldsymbol{\chi}}\arrow[transform canvas={yshift=.5ex},hook]{r}{j_{R(T)}} & \arrow[transform canvas={yshift=-0.5ex},two heads]{l}{\mathscr{P}_{R(T)}}\arrow[bend right,two heads]{ll}{\xcal(g^a)}\xcal(R(T))
\end{tikzcd}
\end{equation*}
\caption{Maps between $\scal$- and $\xcal$-type spaces.}
\end{figure}

\begin{oss}\label{notchi}
 Given a ring $R$,  there is another possible    natural  way to define a   continuous  map $\scal(R(T))\longrightarrow\xcal(R)$. Indeed, define $\boldsymbol{\chi'}$ as the map
\begin{equation*}
\begin{aligned}
\boldsymbol{\chi'}\colon\scal(R(T)) & \longrightarrow \xcal(R)\\
\mathscr{Q} & \longmapsto \{P\in\Spec(R)\mid g(P)\subseteq\mathscr{Q}\}.
\end{aligned}
\end{equation*}
Clearly, $\boldsymbol{\chi}(\mathscr{Q}) \subseteq \boldsymbol{\chi'}(\mathscr{Q})$, for each $\mathscr{Q} \in \scal(R(T))$.  Moreover, a direct calculation shows that $\boldsymbol{\chi'}=j\circ\scal(g)$, so that $\boldsymbol{\chi'}$ is continuous.  Furthermore,  by Proposition \ref{nagata}(3), we have
\begin{equation*}
\boldsymbol{\chi'}\circ\boldsymbol{\nu}= j \circ\scal(g)\circ \boldsymbol{\nu}=j \circ\mathscr{P}.
\end{equation*}
Recall that $\boldsymbol{\chi}\circ\boldsymbol{\nu}$ is the identity on $\xcal({R})$  (Proposition \ref{chi}) and, in general, $\boldsymbol{\chi'}\circ\boldsymbol{\nu} \ (=j \circ\mathscr{P})$ is not  (Proposition \ref{homeo}(3)). We note that $\boldsymbol{\chi'}$, unlike $\boldsymbol{\chi}$, is not surjective:  for example, let $D$ be a 2-dimensional Noetherian local ring and let $\Spec^1(D)$ be the set of the height-1 primes   of $D$.
 Then,   $Z:=\Spec^1(D)\cup\{(0)\}$  is inverse-closed in $\Spec(D)$, and the maximal ideal $M$ of $D$ is contained in the union of the elements of   $Z$.
 Hence, $ MD(T)\subseteq\mathscr{Q}(Z)$, and thus $M\in\{P\in\Spec(D)\mid g(P)\subseteq\mathscr{Q}(Z)\}=\boldsymbol{\chi'}(\mathscr{Q}(Z))$. Therefore, $\boldsymbol{\chi'}(\mathscr{Q}(Z))=\Spec(D)$.
On the other hand, $M \notin\boldsymbol{\chi}(\mathscr{Q}(Z))$,  since  $Z=\boldsymbol{\chi}( \mathscr{Q}(Z))$, because $Z$ is inverse-closed (Remark \ref{oss:flattop}).
We easily conclude that  $Z$ is not in the range of $\boldsymbol{\chi'}$.
As a matter of fact, suppose there exists a semigroup prime $\ms Q^\star$ of $D(T)$ such that $Z=\boldsymbol{\chi'}(\ms Q^\star)=\{P\in \spec(D)\mid P\subseteq g^{-1}(\ms Q^\star) \}$. Thus, the union of all the prime ideals belonging to $Z$ is contained in $g^{-1}(\ms Q^\star)$ and, a fortiori, $M\subseteq g^{-1}(\ms Q^\star)$. It follows that $M\in  \boldsymbol{\chi'}(\ms Q^\star)=Z$, a contradiction.   
\end{oss}

\smallskip
\noindent  {\bf Acknowledgment.}  The authors thank the referee for his/her thorough report and highly appreciate the constructive comments and
suggestions, which contributed to improving the quality of the paper.






\end{document}